\newcommand{\mz}{\ensuremath{\mathbb Z}}
\newcommand{\mr}{\ensuremath{\mathbb R}}
\newcommand{\mq}{\ensuremath{\mathbb Q}}
\newcommand{\shortmod}{\ensuremath{\negthickspace \negthickspace \negthickspace \pmod}}
\newcommand{\intR}{\int_{-\infty}^{\infty}}
\newcommand{\sumstar}{\sideset{}{^*}\sum}
\theoremstyle{plain}		
	\newtheorem{mytheo}{Theorem} [section]
	\newtheorem{myprop}[mytheo]{Proposition}
     \newtheorem{mylemma}[mytheo]{Lemma}
	\newtheorem{myconj}[mytheo]{Conjecture}
	\newtheorem*{myquestion}{Question}
\theoremstyle{remark}
\numberwithin{equation}{section}
\author{Matthew P. Young} 
\address{Department of Mathematics \\
	  Texas A\&M University \\
	  College Station \\
	  TX 77843-3368 \\
		U.S.A.}
\email{myoung@math.tamu.edu}
\thanks{This material is based upon work supported by the National Science Foundation under agreement No. DMS-1101261.  Any opinions, findings and conclusions or recommendations expressed in this material are those of the author and do not necessarily reflect the views of the National Science Foundation.}
\title{The number of solutions to Mordell's equations in constrained ranges}
\begin{document}
\maketitle
 \section{Introduction}
 \subsection{Statement and main results}
The elliptic curves 
\begin{equation}
 \label{eq:Mordell}
 y^2 = x^3 + b
\end{equation}
with $b \in \mz$, $b \neq 0$, are called Mordell curves.  
 It is well-known that the number of integral points on a Mordell curve is finite.  Indeed, there exist effective bounds on the sizes of $x$ and $y$ but they are exponentially large in terms of $|b|$.  M. Hall \cite{Hall}  conjectured that $|x| \ll |b|^{2+\varepsilon}$ (which in turn implies $|y| \ll |b|^{3 + \varepsilon}$).
 The ABC conjecture implies Hall's conjecture.  See Lang's article \cite{Lang} for a survey of this story. 

 This paper was motivated by the question of counting how many integer solutions there are to $y^2 = x^3 + b$ with $|b| \leq X$.  
 This is closely related to the question of counting the number of elliptic curves over $\mq$ with discriminant bounded in absolute value by $X$.  To see the connection, note that any elliptic curve over $\mq$ may be put into Weierstrass form $y^2 = x^3 -27c_4x -54 c_6$, with $c_4, c_6 \in \mz$ and discriminant $1728\Delta = c_4^3 - c_6^2$.

 This problem of counting $D(X)$, the number of elliptic curves with $|\Delta| \leq X$,  was initiated by Brumer and McGuinness \cite{BrumerMcGuinness}, who showed that $D(X) \gg X^{5/6}$ and conjectured that $D(X) \sim c X^{5/6}$ for an explicit $c > 0$.  Fouvry, Nair, and Tenenbaum \cite{FNT} showed that  $D(X) \ll X^{1+\varepsilon}$.
 Duke and Kowalski \cite{DukeKowalski}, building on work of Brumer and Silverman \cite{BrumerSilverman}, showed that the number of isomorphism classes of elliptic cuves with conductor $q$ and $q\leq X$ is $\ll X^{1+\varepsilon}$, which also implies $D(X) \ll X^{1+\varepsilon}$ as the conductor is a divisor of the discriminant.  The number of 
 integral points on \eqref{eq:Mordell} is $\ll |b|^{\varepsilon} h_3(\mq(\sqrt{b}))$ (see \cite{BrumerSilverman}, p.99), where $h_3(K)$ denotes the $3$-part of the class number of the number field $K$.  The above-mentioned estimates of the form $D(X) \ll X^{1+\varepsilon}$ proceed by showing that the cardinality of the $3$-part is $O(|b|^{\varepsilon})$ on average.
 There are also recent non-trivial bounds on the cardinality of $h_3(\mq(\sqrt{b}))$ that hold for each $b$ \cite{Pierce} \cite{HelfgottVenkatesh} \cite{EllenbergVenkatesh}.
  It is a difficult open problem to show that $D(X) = o(X)$ (not to mention $D(X) \ll X^{5/6}$), which would have the new qualitative feature of saying that the set of integers which are the discriminant of an elliptic curve has density $0$.  
 Watkins \cite{Watkins} has given a heuristic approach to predict the asymptotics of $D(X)$ as well as the more subtle number of elliptic curves with conductor up to $X$.  
 
 It is easy to see that the number of solutions to \eqref{eq:Mordell} with $|b| \leq X$ and $|y| \ll |X|^{1/2}$ (whence $|x| \ll |X|^{1/3 }$), is $O(X^{5/6})$ (matching the expected order of magnitude predicted by Brumer and McGuinness).
 One would like to know that solutions become rarer for larger values of $|y|$, when there is some cancellation in $y^2-x^3$.
 Now suppose that $N \gg X^{1/2}$, and consider
 \begin{equation}
  T(N,X) = \# \{m \in \mz, N \leq n \leq 2N : |n^2 - m^3| \leq X\},
 \end{equation}
the number of solutions to \eqref{eq:Mordell} with $|b| \leq X$ and $N \leq y \leq 2N$.  Since we assume $N \gg X^{1/2}$, we need $n^2 \asymp m^3$ for there to be any solutions.  In other words, setting $M = N^{2/3}$, then necessarily $m \asymp M$.
We desire good upper and lower bounds on $T(N,X)$, and for example we wish to show $T(N,X) = o(X)$ for $N$ as large as possible.
 
\begin{mytheo}
\label{thm:mainthm}
 For $M^3 = N^2$, $N \gg X^{1/2}$, we have
 \begin{equation}
 \label{eq:mainthm}
  T(N,X) \ll \frac{XM}{N} +  N^{1/3+\varepsilon}. 
 \end{equation}
\end{mytheo}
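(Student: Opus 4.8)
\emph{The plan} is to count, for each admissible $m$, the number of $n$, reduce to an integer-points-near-a-curve problem, and produce the secondary term by a determinant argument. Since $N \le n \le 2N$ and (in the essential range) $|n^2-m^3| \le X \le N$, we have $m^3 = n^2 + O(X) \asymp N^2 = M^3$, so only $m \asymp M$ contribute. For fixed such $m$ the admissible $n$ lie in $[\sqrt{m^3-X},\sqrt{m^3+X}]$, an interval of length $\ll X/m^{3/2} \ll X/N$, so $m$ carries at most $1 + O(X/N)$ values of $n$. If $X \ge N$ this already gives $T(N,X) \ll (X/N)\,\#\{m \asymp M\} \ll XM/N$, and as $N^{1/3} \le M \le XM/N$ the theorem follows in that range. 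So assume $X < N$ and set $\delta = X/N < 1$. Writing $n$ for the nearest integer to $m^{3/2}$, the identity $|n-m^{3/2}| = |n^2-m^3|/(n+m^{3/2})$ shows $m$ produces a solution iff $\|m^{3/2}\| \ll \delta$, whence
$$T(N,X) \ll (1+\delta)\,A, \qquad A := \#\{\, m \asymp M : \|m^{3/2}\| \ll \delta \,\},$$
and it remains to prove $A \ll M\delta + M^{1/2+\varepsilon}$, recalling $M\delta \asymp XM/N$ and $M^{1/2} = N^{1/3}$.

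The secondary term is genuine and dictates the method. Every square $m=k^2$ with $k \asymp M^{1/2}$ has $\|m^{3/2}\| = \|k^3\| = 0$, so $A \gg M^{1/2}$ for all $\delta$; equivalently the $\asymp N^{1/3}$ points $(m,n)=(k^2,k^3)$ on the cuspidal cubic $y^2=x^3$ already saturate the estimate, so the exponent $M^{1/2+\varepsilon}$ is best possible. This also rules out a purely analytic treatment: completing $A$ into $\sum_{h}\widehat{\chi}(h)\sum_{m}e(h m^{3/2})$ and bounding the inner sums by van der Corput or exponent pairs fails badly for small $\delta$ (the truncation length is $\asymp 1/\delta$), precisely because such bounds cannot see that $\|m^{3/2}\|$ vanishes exactly at the squares. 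The arithmetic of these near-solutions must be exploited, and the $C^k$/convexity bounds of Jarn\'ik and Huxley--Sargos, which use only the sizes $f^{(k)} \asymp M^{3/2-k}$ of the derivatives of $f(x)=x^{3/2}$, stall at $M^{3/4}$--$M^{5/6}$; to reach $M^{1/2}$ one must use that the curve is algebraic.

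I would therefore bound $A$ by the determinant (divided-difference) method of Bombieri--Pila, in the form developed for integer points near curves by Huxley--Sargos and Filaseta--Trifonov. Partition $(M,2M]$ into intervals $I$ of length $L$, and in each let $m_1 < \cdots < m_R$ be the points counted by $A$, with $a_i \in \mz$ the nearest integer to $f(m_i)$, so $|f(m_i)-a_i| \ll \delta$. Because $a_i^2 = m_i^3 + O(X)$, the points $(m_i,a_i)$ lie within $\delta$ of the rationally parametrized curve $(t^2,t^3)$; forming integer determinants $\det(1,m_i,m_i^2,m_i^3,a_i^2)$ from several of them and subtracting the exact cubic column leaves an integer of controlled size whose main part is a high divided difference of $f$ (nonzero, of one sign, reflecting the degree-$3$ structure), while the contribution of the errors is $\ll X$ times symmetric functions of the gaps. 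When the gaps are small this forces the determinant to vanish, which by B\'ezout confines the points to an auxiliary curve meeting $y^2=x^3$ in $O(1)$ points; when it is nonzero one extracts a lower bound on products of consecutive gaps. Summing the resulting gap bound over the $\asymp M/L$ intervals and optimizing $L$ and the order of the differences balances the two contributions at $M\delta + M^{1/2+\varepsilon}$, the gain over the convexity bound coming entirely from the algebraicity of the cubic.

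The main obstacle is exactly this divided-difference bookkeeping: one must handle clusters of points with very unequal gaps (where an individual denominator $\prod_{j\ne i}(m_i-m_j)$ can be as small as $O(1)$), furnish a true lower bound $\ge 1$ for the non-vanishing integer determinants together with the B\'ezout step controlling the vanishing case, and choose the order and the length $L$ uniformly in $\delta$ across the whole range $\delta \in (N^{-1},1)$ so as to land on the exponent $1/2$ rather than the weaker $3/4$. This is where the $M^{\varepsilon}$ (or a logarithmic factor) enters and is the only delicate part of the argument; by contrast the reduction above and the range $X \ge N$ are routine.
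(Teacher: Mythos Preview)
Your reduction to the problem of bounding $A = \#\{m \asymp M : \|m^{3/2}\| \ll \delta\}$ is correct, as is your identification of the squares $m=k^2$ as the source of the $N^{1/3}$ term. However, the proposal is not a proof: the crucial claim that the determinant method yields $A \ll M\delta + M^{1/2+\varepsilon}$ is asserted but not established, and the literature strongly suggests it does not follow from the argument you sketch. The paper's own survey (Section~1.2) records that Huxley's adaptation of Bombieri--Pila to exactly this problem achieves only $E \ll (MN)^{4/15} \asymp N^{4/9} = M^{2/3}$, not $M^{1/2}$. Your proposed sharpening hinges on the B\'ezout step: when the determinant vanishes, the points $(m_i,a_i)$ lie on an auxiliary curve of bounded degree, and you invoke B\'ezout against $y^2=x^3$. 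But the $(m_i,a_i)$ are not \emph{on} $y^2=x^3$; they are merely near it, and nothing prevents the auxiliary curve from containing many integer points in a tube around the cuspidal cubic. The phrase ``the only delicate part'' understates the situation: this is precisely where the known determinant arguments stall, and you have not supplied the missing idea.

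The paper's proof is entirely different and does not use determinants at all. It works with a smoothed count $T_S(N,X)$, applies Poisson summation in $n$ and then in $m$, and after stationary phase arrives at a dual sum of the shape $\sum_{l}\sum_{k} e(4k^3/(27l^2))$. The decisive observation --- and this is where the algebraicity of $x^{3/2}$ enters, in a completely different guise from your B\'ezout idea --- is that the denominator is a \emph{square}. Writing $k = k_0 + 3lk_1$ with $0 \le k_0 < 3l$ makes the phase linear in $k_1$; one more Poisson in $k_1$ then reduces everything to a trivial count of $(a,b,c)$ with $|b^2-ac|$ small, which is handled by an elementary lemma. The resulting error $N^{2/3+\varepsilon}/X^{1/2} + X^{1/2}N^{\varepsilon}$ is then massaged into $N^{1/3+\varepsilon}$ via the monotonicity of $T(N,X)$ in $X$. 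If you want a non-exponential-sum route to $N^{1/3+\varepsilon}$, the only one in the literature is Elkies's lattice-reduction argument, which also exploits a special feature of $x^{3/2}$ (a symmetric-square structure in the relevant three-dimensional lattices) rather than a generic determinant bound.
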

The bound \eqref{eq:mainthm} shows
$T(N,X) = o(X)$ for $X \ll N \ll X^{3-\varepsilon}$.  
Hall's conjecture would imply $T(N,X) - T(N,0) = 0$ for $N \gg X^{3+\varepsilon}$ (here we subtracted $T(N,0)$ since it counts the trivial solutions $y^2 = x^3$).  
It therefore seems interesting to close the narrow gap and show that $T(N,X) - T(N,0) =o(X)$ for $X^{3-\varepsilon} \ll N \ll X^{3+\varepsilon}$ as this would imply $D(X) = o(X)$ assuming Hall's conjecture.

Actually, our main focus is an asymptotic formula for a smoothed version of $T(N,X)$.  For $i=1,2$, let $w_i$ be a fixed smooth, compactly-supported function on $\mr$, and define
the smoothed counting function
 \begin{equation}
 \label{eq:TSdef}
T_S(N,X) = \sum_{m,n \in \mz} w_1\Big(\frac{m}{M}\Big) 
w_2\big(Z(n - m^{3/2}) \big),
 \end{equation}
where
we have set 
 \begin{equation}
  Z=\frac{N}{X}.
 \end{equation}
For appropriate choices of $w_i$, we have $T(N,X) \leq T_S(N,X)$, for which we now provide a brief explanation.  We choose $w_1$ and $w_2$ nonnegative, with $w_1(m/M) = 1$ for $\eta M < m < \eta^{-1} M$ for some $\eta > 0$, and $w_2(t) = 1$ for $-1 \leq t \leq 1$.  We need to know that if $|n^2 - m^3| \leq X$ and $N \leq n \leq 2N$, then this solution is counted in \eqref{eq:TSdef}.  For this, we note that $|n-m^{3/2}| \leq \frac{X}{n+m^{3/2}} \leq \frac{1}{Z(1+ \eta^{3/2})} \leq \frac{1}{Z}$, so $w_2 = 1$ for such points.  Similarly, $N^2 - X \leq m^3 \leq X + 4N^2$, and assuming $N^2 \geq 2X$, say, we derive that $\frac12 M^3 \leq m^3 \leq \frac92 M^3$, whence $w_1(m/M) = 1$ for such points, provided $\eta < (2/9)^{1/3}$.
%
 For other choices of $w_i$, we have $T(N,X) \geq T_S(N,X)$, but we omit the details as they are quite similar to the above case.  We prefer to study the smoothed version of $T(N,X)$ because it simplifies some analytic issues, yet retains the same basic qualitative features of the un-smoothed counting function.
 
 Our main result is the following
\begin{mytheo}
\label{thm:smoothTsum}
 With weight functions $w_i$ as above, and assuming $N \gg X^{1/2}$, we have
 \begin{equation}
 \label{eq:smoothTsumsimplified}
  T_S(N,X) = \frac{XM}{N} \widehat{w_1}(0) \widehat{w_2}(0) + O(N^{1/3 + \varepsilon} + X^{1/2} N^{\varepsilon}). 
 \end{equation}
\end{mytheo}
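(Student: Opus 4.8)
The plan is to detach the $n$-sum by Poisson summation. For fixed $m$, writing $g(u)=w_2(Zu)$ we have $\sum_n w_2\big(Z(n-m^{3/2})\big)=\sum_n g(n-m^{3/2})=\frac1Z\sum_k \widehat{w_2}(k/Z)\,e(-km^{3/2})$, so that
\begin{equation*}
 T_S(N,X)=\frac1Z\sum_{k\in\mz}\widehat{w_2}(k/Z)\,S_k,\qquad S_k:=\sum_m w_1\Big(\frac{m}{M}\Big)e(-km^{3/2}).
\end{equation*}
The term $k=0$ is the main term: it equals $\frac1Z\widehat{w_2}(0)\sum_m w_1(m/M)$, and a second application of Poisson (or Euler--Maclaurin) in $m$ gives $\sum_m w_1(m/M)=M\widehat{w_1}(0)+O(M^{-A})$ since $w_1$ is smooth and compactly supported; as $M/Z=XM/N$ this produces $\frac{XM}{N}\widehat{w_1}(0)\widehat{w_2}(0)$ with negligible error. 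It therefore remains to bound the off-diagonal $E:=\frac1Z\sum_{k\neq0}\widehat{w_2}(k/Z)\,S_k$ by $N^{1/3+\varepsilon}+X^{1/2+\varepsilon}$. Because $\widehat{w_2}$ is Schwartz, the $k$-sum is essentially supported on $1\le|k|\ll Z^{1+\varepsilon}$ with a negligible tail; in particular if $N\ll X$ (so $Z\ll1$) then $|k|/Z\gg1$ already for $k\neq0$, every such term is negligible, and only the main term survives, so we may assume $Z\gg1$.

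For the remaining range I would estimate the Weyl sums $S_k=\sum_{m\sim M}e(-km^{3/2})$ by the van der Corput $B$-process: apply Poisson in $m$ and evaluate the resulting oscillatory integrals $\int w_1(s)\,e(-kM^{3/2}s^{3/2}-\ell Ms)\,ds$ by stationary phase. The stationary point $s_0=4\ell^2/(9k^2M)$ lies in the support of $w_1$ exactly when $|\ell|\asymp|k|M^{1/2}$, the stationary value is $\tfrac{4}{27}\ell^3/k^2$, and the second derivative is $\asymp|k|M^{3/2}$ there; this is the classical duality sending lattice points near $y=x^{3/2}$ to lattice points near the cubic $y=\tfrac{4}{27}x^3$. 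One obtains
\begin{equation*}
 S_k=\frac{c\,M^{1/4}}{k^{1/2}}\sum_{\ell\asymp|k|M^{1/2}}V\Big(\frac{\ell}{|k|M^{1/2}}\Big)\,e\Big(\frac{4\ell^3}{27k^2}\Big)+(\text{negligible}),
\end{equation*}
with $V$ a fixed smooth weight, reducing everything to incomplete cubic exponential sums of denominator $\asymp k^2$.

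The heart of the matter is then to bound dyadically in $K\ll Z$ the double sum $\frac1Z\sum_{k\asymp K}\sum_{\ell\asymp KM^{1/2}}k^{-1/2}M^{1/4}\,e(4\ell^3/(27k^2))$. I would treat the inner cubic sum by splitting $\ell$ into residue classes modulo the denominator and invoking the Weyl/complete-cubic-sum bound together with the second-derivative test to extract power savings. The term $N^{1/3}=M^{1/2}$ should emerge from the diagonal-type contribution: in the mean-square formulation $\sum_{k\le Z}|S_k|^2$ the diagonal $m=m'$ contributes $\asymp ZM$, and Cauchy--Schwarz gives $\frac1Z\sum_{k\le Z}|S_k|\ll\frac1Z\cdot Z^{1/2}(ZM)^{1/2}=M^{1/2}$. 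The genuinely oscillatory off-diagonal contribution should then produce the term $X^{1/2}$.

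The main obstacle is precisely this last estimate: obtaining square-root-type cancellation in the cubic exponential sums uniformly across the whole range $k\ll Z$, sharply enough that the sum over $k$ loses no more than a factor $N^\varepsilon$. Equivalently, in the mean-square formulation one must count pairs $m,m'\asymp M$ with $\|m^{3/2}-m'^{3/2}\|\le 1/Z$ and show that the off-diagonal near-coincidences contribute at most $X^{1/2}$; these reflect the spacing statistics of $\{m^{3/2}\}$, and a naive absolute-value bound (treating each $h=m-m'$ as equidistributing with density $\asymp M/Z$) overshoots by a factor $N^{1/6}$. I therefore expect that the phases in both $k$ and $\ell$ must be retained throughout, so that the delicate cancellation in the dual cubic sum is exploited rather than discarded.
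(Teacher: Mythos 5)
Your reduction is exactly the paper's: Poisson summation in $n$ isolates the volume term from the zero frequency, Poisson in $m$ followed by stationary phase yields the dual sum $\sum_{k}\sum_{\ell\asymp |k|M^{1/2}} e\bigl(4\ell^3/(27k^2)\bigr)$, and the stationary-phase error $O(X^{1/2}N^{-1})$ is harmless. But the proof stops where the real work begins, and you say so yourself: you have no argument that beats the trivial bound on the dual cubic sum by the required amount. The substitutes you float do not close the gap. The mean-square route is circular: bounding the off-diagonal of $\sum_{k\le Z}|S_k|^2$ is precisely the problem of counting pairs $m,m'\asymp M$ with $\|m^{3/2}-m'^{3/2}\|$ small, i.e.\ the original lattice-point problem, and as you concede the naive bound overshoots by $N^{1/6}$. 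Generic Weyl or completion bounds for the incomplete cubic sum likewise lose too much when $k$ is near $Z$ and the $\ell$-sum barely covers one period of the modulus $27k^2$.

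The idea you are missing is that the denominator $27k^2$ is (up to the constant $27$) a perfect \emph{square}, and this linearizes the cubic phase. Writing $\ell=\ell_0+3k\ell_1$ with $0\le\ell_0<3k$, one has $\ell^3\equiv \ell_0^3+9k\ell_0^2\ell_1 \pmod{27k^2}$, so for fixed $\ell_0$ the phase is \emph{linear} in $\ell_1$, namely $e\bigl(4\ell_0^2\ell_1/(3k)\bigr)$ times a constant. Poisson summation in $\ell_1$ (a variable of length $\asymp M^{1/2}$) then forces the dual frequency $r$ into the range $|4\ell_0^2-3kr|\ll kM^{-1/2+\varepsilon}$, and at that point the paper gives up all remaining cancellation --- no delicate phase interplay between $k$ and $\ell$ is retained, contrary to your closing expectation --- and simply counts the lattice points $(k,\ell_0,r)$ near the quadric $b^2=ac$ by an elementary divisor-function argument (Lemma \ref{lemma:Ubound}). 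This yields $O(N^{2/3+\varepsilon}X^{-1/2}+X^{1/2}N^{\varepsilon})$, which is then massaged into \eqref{eq:smoothTsumsimplified}. Without this linearization step your argument does not produce the stated error term.
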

Theorem \ref{thm:smoothTsum} implies Theorem \ref{thm:mainthm} as a short calculation shows that the $X^{1/2}$ term may be dropped since $X^{1/2} \ll X/N^{1/3} + N^{1/3}$.  
One can check that \eqref{eq:smoothTsumsimplified} is an asymptotic formula precisely for $X \gg N^{2/3 + \varepsilon}$.

In Section \ref{section:conjecture} below we speculate on the possible shape of an asymptotic formula for $T_S(N,X)$ valid for smaller values of $X$.  Our analysis connects the problem to the equidistribution of roots of quadratic congruences, as well as the theory of cubic metaplectic Eisenstein series.

The term $\frac{XM}{N} \widehat{w_1}(0) \widehat{w_2}(0)$ appearing in \eqref{eq:smoothTsumsimplified} is the (weighted) area of the region corresponding to the sum $T_S(N,X)$, which is the usual expected main term in lattice point counting problems; we shall call this the \emph{volume term}.
Note that $\frac{XM}{N} = \frac{X}{N^{1/3}}$ which naturally leads one to predict that if $N \gg X^{3+\varepsilon}$, then there are no points for such $N$ (this is one way to arrive at Hall's conjecture).  However, there is a trivial family of solutions $x=t^2, y=t^3$ showing $T(N,0) \asymp N^{1/3}$, so this line of reasoning needs to be modified to account for this.
Still, one is naturally led to wonder about the asymptotic behavior of $T_S(N,X) - T_S(N,0)$, for $X \ll N^{2/3+\varepsilon}$.  This appears to be a  challenging problem. 
One of the difficulties in obtaining more nuanced information on $T_S(N,X)$ for $X$ relatively small is that there are many one-parameter polynomial families of solutions to \eqref{eq:Mordell} with $x(t)^3 - y(t)^2 = b(t)$, with $b(t)$ of small degree.  We now briefly give a historical summary of some of these results, before returning to this discussion.

In the 1960's, Birch, Chowla, Marshall, and Schinzel \cite{BCMS} constructed rational coefficient polynomials $x(t)$, $y(t)$ with $x$ of degree $10$, $y$ of degree $15$, and $x^3-y^2$ of degree $6$.  
More generally, suppose that there exist $x, y \in \mq[t]$ where $x$ has degree $2 \delta$, $y$ has degree $3 \delta$, and $x^3 - y^2$ has degree $\delta + 1$.  Such a family is called a Davenport family, since Davenport \cite{Davenport} has shown that this is the minimal possible degree of $x^3 -y^2$.  
Beukers and Stewart \cite[Section 7.3]{BS} have given such families for $\delta = 1,2,3,4,5$; Hall \cite{Hall} gave an example with $\delta = 4$ and an example with $\delta = 3$ occurs in \cite{BCMS}.  
Elkies \cite[Section 4.1]{Elkies} has a much more comprehensive survey of some of these results.  Elkies also mentions that it is unknown if there are any Davenport families with $\delta > 5$ (if $x$ and $y$ are allowed to have complex coefficients, then such polynomials do exist, for any $\delta$).  Dujella \cite{Dujella} has given examples with $x$ and $y$ as above, but with $x^3 -y^2$ of degree $\delta + 5$, for each even $\delta = 2, 4, \dots$.  Zannier \cite{Zannier} also has some interesting discussions on these parameterized families.  As an example, the case with $\delta = 1$ is unique up to natural changes of variable, and takes the form
\begin{equation}
\label{eq:Davenportexample}
 x = t^2 + 1, \qquad y = t^3 + \tfrac{3}{2} t, \qquad x^3 - y^2 = \tfrac{3}{4} t^2 + 1.
\end{equation}

Let us see how these polynomial families constrain the size of $T(N,X)$.  If there exists a Davenport family with $x$ of degree $2 \delta$, this means $T(N,X) - T(N,0) \gg N^{\frac{1}{3 \delta}}$ provided $X \gg N^{\frac{1}{3} + \frac{1}{3 \delta}}$, with a large enough implied constant.  This follows simply by counting only the solutions given by the polynomial family.  Curiously, if $X \asymp N^{\frac{1}{3} + \frac{1}{3\delta}}$, then $\frac{XM}{N} \asymp \frac{X}{N^{1/3}} \asymp N^{\frac{1}{3 \delta}}$, so the count from this polynomial family alone is of the same order of magnitude as the volume term!  Notice also that if $X \leq \varepsilon N^{\frac{1}{3} + \frac{1}{3 \delta}}$ with a small enough $\varepsilon > 0$, then $T(N,X)$ does not count \emph{any} solutions from the polynomial family.  
Taking $\delta = 1$, and $X \asymp N^{2/3}$, we see that $T(N, X)$ is necessarily complicated in this range.

Our method of proof of Theorem \ref{thm:smoothTsum} can be easily adapted to the analog of $T_S(N,X)$ but with the additional constraint that if $d^2 | m$ and $d^3 | n$ then $d=1$.
Using M\"{o}bius inversion to detect this divisor condition, this counting function is
\begin{equation}
\label{eq:CSdef}
 C_S(N,X) = \sum_{\substack{m,n \in \mz}} w_1\Big(\frac{m}{M}\Big) 
w_2\big(Z(n - m^{3/2}) \big) \sum_{d^2 |m, \; d^3 | n} 
\mu(d).
\end{equation}
\begin{myprop}
 With conditions as in Theorem \ref{thm:smoothTsum}, we have
 \begin{equation}
  C_S(N,X) = \frac{XM}{N} \frac{\widehat{w_1}(0) \widehat{w_2}(0)}{\zeta(5)} + O(N^{1/3 + \varepsilon} + X^{1/2} N^{\varepsilon}).
 \end{equation}
\end{myprop}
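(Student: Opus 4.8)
The plan is to reduce the Proposition directly to Theorem~\ref{thm:smoothTsum} by interchanging the order of summation in \eqref{eq:CSdef} and rescaling. Bringing the sum over $d$ to the front and substituting $m = d^2 m'$, $n = d^3 n'$, and using $m^{3/2} = d^3 (m')^{3/2}$ so that $n - m^{3/2} = d^3\big(n' - (m')^{3/2}\big)$, one obtains
\[
C_S(N,X) = \sum_{d \geq 1} \mu(d) \sum_{m', n' \in \mz} w_1\Big(\frac{d^2 m'}{M}\Big)\, w_2\big(Z d^3 (n' - (m')^{3/2})\big).
\]
Because $w_1$ is compactly supported and the surviving $m'$ satisfy $m' \geq 1$, a nonzero inner term forces $d^2 \ll M$, so the sum over $d$ is in fact finite, supported on $d \ll M^{1/2} = N^{1/3}$.

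The inner sum is exactly a copy of $T_S$ with rescaled parameters: the argument of $w_1$ is $m'/(M/d^2)$ and the effective frequency inside $w_2$ is $Z d^3$. Setting $N_d = N/d^3$ and $X_d = X/d^6$, one checks that $M_d := M/d^2 = N_d^{2/3}$ and $Z d^3 = N_d / X_d$, so that the inner sum equals $T_S(N_d, X_d)$ for the \emph{same} fixed weights $w_1, w_2$. The hypothesis of Theorem~\ref{thm:smoothTsum} is scale-invariant, since $N_d \gg X_d^{1/2}$ is equivalent to $N \gg X^{1/2}$; hence the theorem applies uniformly in $d$, and the constant $\widehat{w_1}(0)\widehat{w_2}(0)$ is independent of $d$.

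Applying Theorem~\ref{thm:smoothTsum} term by term, and using $\frac{X_d M_d}{N_d} = \frac{XM}{N}\, d^{-5}$, the main terms contribute
\[
\sum_{d} \mu(d)\, \frac{X_d M_d}{N_d}\, \widehat{w_1}(0)\widehat{w_2}(0) = \frac{XM}{N}\, \widehat{w_1}(0)\widehat{w_2}(0) \sum_{d} \frac{\mu(d)}{d^5}.
\]
Extending the absolutely convergent sum to all $d \geq 1$ replaces it by $\sum_{d\geq 1}\mu(d)/d^5 = 1/\zeta(5)$; the tail $\sum_{d \gg N^{1/3}} \mu(d)\,\frac{X_d M_d}{N_d}$ thereby discarded is $\ll \frac{XM}{N} N^{-4/3} = X N^{-5/3}$, which is absorbed into the error term since $X \ll N^2$. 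This yields the claimed main term $\frac{XM}{N}\frac{\widehat{w_1}(0)\widehat{w_2}(0)}{\zeta(5)}$.

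For the error I would sum the bound of Theorem~\ref{thm:smoothTsum} over $d \ll N^{1/3}$, obtaining
\[
\sum_{d} \big(N_d^{1/3+\varepsilon} + X_d^{1/2} N_d^{\varepsilon}\big) = N^{1/3+\varepsilon} \sum_{d} d^{-1-3\varepsilon} + X^{1/2} N^{\varepsilon} \sum_{d} d^{-3-3\varepsilon},
\]
and both $d$-sums converge, so the total error is $O(N^{1/3+\varepsilon} + X^{1/2}N^{\varepsilon})$. The deduction is essentially bookkeeping once the rescaling is in place; the only points requiring genuine care are confirming that the inner sum really is $T_S(N_d, X_d)$ for the same weight functions, that the $d$-sums of the error terms converge (which they do comfortably, the exponents scaling like $d^{-1-3\varepsilon}$ and $d^{-3-3\varepsilon}$), and that the degenerate terms with $N_d = O(1)$ near the top of the range can be bounded trivially. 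I do not anticipate a serious obstacle here, as all the analytic content is already contained in Theorem~\ref{thm:smoothTsum}.
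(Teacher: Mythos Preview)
Your proposal is correct and follows essentially the same approach as the paper: interchange the order of summation in \eqref{eq:CSdef}, change variables $m \to d^2 m$, $n \to d^3 n$ to recognize the inner sum as $T_S(N/d^3, X/d^6)$, and then insert Theorem~\ref{thm:smoothTsum} and sum over $d$. Your write-up simply makes explicit the bookkeeping (the range $d \ll N^{1/3}$, the scale-invariance of the hypothesis $N \gg X^{1/2}$, the tail of the main term, and the convergence of the error sums) that the paper leaves as ``trivial estimates.''
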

\begin{proof}
In \eqref{eq:CSdef} we reverse the orders of summation and change variables $m \rightarrow d^2 m$, $n \rightarrow d^3 n$, getting
\begin{equation}
 C_S(N,X) = \sum_{d \ll N^{1/3}} \mu(d) T_S \Big(\frac{N}{d^3}, \frac{X}{d^6} \Big).
\end{equation}
Directly inserting \eqref{eq:smoothTsumsimplified} and performing trivial estimates completes the proof.
\end{proof}

\subsection{Comparison with previous work}
\label{section:literature}
Here we compare Theorem \ref{thm:mainthm} with other results in the literature.  
The estimation of $T(N,X)$ is a lattice-point counting problem for which there is an extensive literature, so it is difficult to be comprehensive.  Conveniently,
Huxley \cite{Huxley} and Trifonov \cite{Trifonov} have surveyed many of the results obtained by the known methods, and our sampling of results below is informed in large part by their summaries.  As a simple reduction step, we first note that $|y^2 - x^3| \ll X$ implies, and is implied by
\begin{equation}
\label{eq:latticecount}
 |y - x^{3/2}| \ll \frac{X}{N}.
\end{equation}
Many results in the literature concern estimates for the number of solutions to $|y-f(x)| \leq \delta$ for rather general classes of functions $f$; our technique is not applicable in general as we use special properties of $f(x) = x^{3/2}$.

The number of solutions to \eqref{eq:latticecount} (at least for $X/N$ not too small) is $c \frac{XM}{N} + O(E)$, where $E$ is an ``error'' term (it may not actually be smaller than the main term), and $c$ is a constant depending on the implied constant in \eqref{eq:latticecount}.  One classical result is $E = (MN)^{1/3} \asymp N^{5/9}$ due to Van der Corput. 

 Huxley and Trifonov \cite{HuxleyTrifonov} showed that with $M^3 = N^2$ and $N \gg X^{3/2 + \varepsilon}$, $E \ll (MN)^{3/10+\varepsilon}$, so $E\ll N^{1/2 + \varepsilon}$.  Their strategy is a generalization of a method of Swinnerton-Dyer \cite{S-D} (who dealt with $\delta = 0$ and $M = N$). 

Apparently the strongest asymptotic formula in the literature 
valid for $T(N,X)$ is due to Huxley \cite{Huxley}, who showed $E \ll (MN)^{4/15} \asymp N^{4/9}$ by a modification of a method of Bombieri-Pila \cite{BombieriPila}.  

Very recently, Baier and Browning \cite{BaierBrowning} have studied generalizations of $T(N,X)$ allowing more general cubics of the form $ay^2 + bx^3$ with $a, b \in \mz$, with a focus on the solutions restricted by the congruence $ay^2 + bx^3 \equiv 0 \pmod{q}$.  One can also view this as studying small solutions to $ay^2 + bx^3$ but where small is measured in a non-Archimedean sense.  See their Theorem 8.1 for the precise estimate.

In a different direction, Elkies \cite[Section 4.2]{Elkies} has developed an algorithm to find all the integer solutions to $|x^3-y^2| \ll M$ with $M \leq x \leq 2M$ in time $O(M^{1/2+\varepsilon})$, so in particular there are at most $O(M^{1/2+\varepsilon})$ solutions to find.  Recall that $M^{1/2} = N^{1/3}$.
In our notation, this means $T(N,N^{2/3}) \ll N^{1/3+\varepsilon}$, which agrees with our Theorem \ref{thm:mainthm} for $N \gg X^{3/2}$.  
Thus the result of Elkies is comparable to our Theorem \ref{thm:mainthm}, at least in terms of the ranges in which it can show $T(N,X) \asymp \frac{XM}{N}$; it is stronger in that it can be used to numerically find all the solutions.  Our result is complementary in that it provides a formula for a smoothed version of $T(N,X)$, with an explicit error term.  
Our method, using exponential sums, works most effectively for $X$ large (ultimately, by the duality principle in harmonic analysis), while a direct computation of $T(N,X)$ becomes less efficient with larger values of $X$, for the trivial reason that there are more solutions to count.  Elkies mentions (see \cite[final paragraph of Section 4.2]{Elkies}) that his results improved considerably on the general exponential sums techniques.  One outcome of our work here therefore puts the exponential sum method back on a roughly equal footing for this particular problem.  It is curious that Elkies also uses quite special properties of the function $f(x)=x^{3/2}$.  In particular, his lattice reduction method leads to $3$-dimensional lattices that turn out to be the symmetric-squares of $2$-dimensional lattices.  This effectively lowers the dimension in the lattice reduction step, leading to a significant gain in his approach.

We did not investigate the problem of finding an asymptotic formula for $T(N,X)$.  
The method of ``un-smoothing'' is well-known but the details can often be cumbersome, so we did not pursue this line of thought.

\subsection{Discussion of the method of proof}
 Our approach is to use the method of exponential sums.  After an application of the Poisson summation formula (in both variables), we are led to a dual exponential sum of the rough shape
 \begin{equation*}
  \sum_{0 < l \ll \frac{N}{X}} \sum_{k \asymp l \sqrt{M}} e\Big(\frac{4 k^3}{27 l^2}\Big).
 \end{equation*}
The Van der Corput approach in general converts a lattice point count into a dual exponential sum of a certain shape, but the special feature here is that we obtain a rational function as an argument of the exponential.  Furthermore, and crucially, the denominator is a \emph{square} (up to the constant factor $27$).  This extra structure is the key to additional savings which we exploit by writing $k = k_0 + 3l k_1$ where $0 \leq k_0 < 3l$ and $k_1 \asymp \sqrt{M}$.  This leads to a linear exponential sum in $k_1$ which allows for significant cancellation, and is the source of our improvements over the more general methods summarized in Section \ref{section:literature}

\subsection{Acknowledgment}
I originally encountered this problem of estimating $D(X)$ while working on my PhD thesis under Henryk Iwaniec.  It is from him that I first learned the theory of exponential sums and in particular techniques for treating sums to square moduli, and it is pleasure to thank him.  I also thank Tim Browning for interesting comments.

\section{Proof of Theorem \ref{thm:smoothTsum}}
We already mentioned how Theorem \ref{thm:smoothTsum} implies Theorem \ref{thm:mainthm}.
Rather than the bound in \eqref{eq:smoothTsumsimplified}, our method most naturally shows
\begin{equation}
 \label{eq:smoothTsum}
  T_S(N,X) = \frac{XM}{N} \widehat{w_1}(0) \widehat{w_2}(0) + O\Big(\frac{N^{2/3+\varepsilon}}{X^{1/2}} + X^{1/2} N^{\varepsilon}\Big),
 \end{equation}
 but we now argue that \eqref{eq:smoothTsumsimplified} and \eqref{eq:smoothTsum} are really equivalent.
The error term in \eqref{eq:smoothTsum} is smaller than the main term provided $X \gg N^{2/3 + \varepsilon}$, in which case \eqref{eq:smoothTsum} agrees with \eqref{eq:smoothTsumsimplified}.  In the complementary case $X \ll N^{2/3 + \varepsilon}$, \eqref{eq:smoothTsum} is really an upper bound.  We have $T_S(N,X) \leq T(N',X') \ll T_S(N',X')$ for certain choices $N' \asymp N$ and $X' \asymp X$, where the implied constants depend on the choice of weight functions $w_1, w_2$.  But then we may use the fact that $T(N, X)$ is increasing in $X$ (for fixed $N$), so in effect we may replace $X$ by $X + N^{2/3}$ in \eqref{eq:smoothTsum}, which leads to \eqref{eq:smoothTsumsimplified}.


 Our first move is common in lattice point counting problems: applying Poisson summation.  We use this in $n$ first, obtaining
 \begin{equation}
 \label{eq:TSonePoisson}
  T_S(N,X) = \sum_{m \in \mz} w_1\Big(\frac{m}{M}\big) \sum_{l \in \mz} \intR w_2(Z(y-m^{3/2})) e(-yl) dy.
 \end{equation}
Changing variables $y \rightarrow y + m^{3/2}$ and evaluating the $y$-integral gives
\begin{equation}
\label{eq:TSonePoisson2}
 T_S(N,X) = Z^{-1} \sum_{m \in \mz} w_1\Big(\frac{m}{M}\big) \sum_{l \in \mz} e(-l m^{3/2}) \widehat{w_2}\Big(\frac{l}{Z}\Big) .
\end{equation}
We chose to define $T_S(N, X)$ via \eqref{eq:TSdef} so that the variables would nicely separate here (at least, in terms of the weight functions $w_1, w_2$).
The term $l = 0$ gives the following main term, consistent with \eqref{eq:smoothTsum}:
\begin{equation}
\label{eq:Mainterm}
 Z^{-1} M \widehat{w_1}(0) \widehat{w_2}(0) + O(Z^{-1} M^{-100}).  
\end{equation}
We may as well record the effect of trivially bounding the terms with $l \neq 0$:
\begin{equation}
\label{eq:TSeasybound}
 T_S(N,X) = \frac{XM}{N} \widehat{w_1}(0) \widehat{w_2}(0) + O(M).
\end{equation}
 If $Z \ll N^{-\varepsilon}$, i.e., $X \gg N^{1+\varepsilon}$, then the sum over $l \neq 0$ is small from the rapid decay of $\widehat{w_2}$.  

From now on assume $Z \gg N^{-\varepsilon}$, and 
let $T_S'(N, X)$ denote the contribution from $l \neq 0$ in \eqref{eq:TSonePoisson}.  As we wish to exploit cancellation in the sum over $m$, we apply Poisson summation in $m$.  Thus we have
\begin{equation}
\label{eq:TS'}
 T_S'(N, X) = Z^{-1} \sum_{k \in \mz} \sum_{l \neq 0} \widehat{w_2}\Big(\frac{l}{Z}\Big)  \intR w_1\Big(\frac{x}{M}\Big) e(-l x^{3/2} + kx) dx.
\end{equation}
We shall approximate the inner $x$-integral by the stationary phase method.  
\begin{mylemma}
\label{lemma:exponentialintegral}
Let $w_1$ be a fixed smooth function on the positive reals, and let
\begin{equation}
 I =  \intR w_1\Big(\frac{x}{M}\Big) e(-l x^{3/2} + kx) dx.
\end{equation}
If $|k| \leq \eta |l| M^{1/2}$ or $|k| \geq \eta^{-1} |l| M^{1/2}$ for some sufficiently small but fixed $\eta > 0$ or if $k$ and $l$ have opposite signs, then
\begin{equation}
\label{eq:Istrongupperbound}
 I \ll (|l| \sqrt{M} + |k|)^{-100}.
\end{equation}
If $k$ and $l$ have the same sign and $k/l \asymp \sqrt{M}$, then 
\begin{equation}
\label{eq:Iasymptotic} 
 I =   c e^{ \frac{-l}{|l|} \pi i/4}
  \frac{ \sqrt{|k|}}{|l|} e\Big(\frac{4 k^3}{27l^2}\Big) w_1\Big(\frac{4 k^2}{9M l^2} \Big) + O(|l|^{-3/2} M^{-5/4}),
\end{equation}
where $c = 2 \sqrt{2}/3$.
\end{mylemma}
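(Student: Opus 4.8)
The plan is to evaluate $I$ by the method of stationary phase, with phase $\phi(x) = -lx^{3/2} + kx$ (so the integrand is $w_1(x/M)\,e(\phi(x))$). First I would record the elementary data: on the support $x \asymp M$ of $w_1(x/M)$ one has $\phi'(x) = -\tfrac32 l x^{1/2} + k$ and $\phi''(x) = -\tfrac34 l x^{-1/2}$, and the unique real stationary point is $x_0 = \tfrac{4k^2}{9l^2}$, which is positive exactly when $k$ and $l$ share a sign, and which lies in the support only when in addition $k/l \asymp \sqrt{M}$. A direct computation gives $\phi(x_0) = \tfrac{4k^3}{27l^2}$ and $\phi''(x_0) = -\tfrac{9l^2}{8k}$ (for $k,l>0$; the signs propagate in the obvious way, and indeed $\operatorname{sgn}\phi''(x_0) = -\tfrac{l}{|l|}$). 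These already pin down the oscillatory factor $e\big(\tfrac{4k^3}{27l^2}\big)$ and, through $|\phi''(x_0)|^{-1/2} = \tfrac{2\sqrt2}{3}\tfrac{\sqrt{|k|}}{|l|}$, both the amplitude and the constant $c = 2\sqrt2/3$ in \eqref{eq:Iasymptotic}, together with the phase $e^{-\frac{l}{|l|}\pi i/4}$. To keep the sizes of all derivatives transparent I would first rescale $x = Mu$, writing $I = M\int w_1(u)\, e(\psi(u))\, du$ with $\psi(u) = -lM^{3/2}u^{3/2} + kMu$; the effective oscillation parameter is then $\lambda \asymp |l|M^{3/2} + |k|M$, and on $u \asymp 1$ all derivatives of $w_1$ are $O(1)$ while $\psi^{(j)}(u) = O(\lambda)$ for $j \ge 1$.

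For the non-stationary bound \eqref{eq:Istrongupperbound} I would show that $\phi'$ (equivalently $\psi'$) stays bounded away from zero on the support. When $k$ and $l$ have opposite signs the two terms of $\psi'(u) = -\tfrac32 lM^{3/2}u^{1/2} + kMu^{0}$ point the same way, so $|\psi'(u)| \gg |l|M^{3/2} + |k|M$ throughout $u \asymp 1$. The same lower bound holds, for $\eta$ small (respectively $\eta^{-1}$ large) enough, when $|k| \le \eta |l|\sqrt{M}$ (the $l$-term dominates) or $|k| \ge \eta^{-1}|l|\sqrt{M}$ (the $k$-term dominates), since then $x_0$ is pushed off the support with a fixed margin. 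In every one of these cases I would apply the standard non-stationary phase lemma, repeatedly integrating by parts against $\tfrac{1}{2\pi i \psi'}\tfrac{d}{du}$; each step gains a factor $\lambda^{-1}$ while the boundary terms vanish by compact support, yielding $I \ll_A (|l|\sqrt{M} + |k|)^{-A}$ for every $A$. Taking $A = 100$ gives \eqref{eq:Istrongupperbound}.

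For the stationary asymptotic \eqref{eq:Iasymptotic}, when $k$ and $l$ share a sign and $k/l \asymp \sqrt{M}$, the stationary point $u_0 = \tfrac{4k^2}{9l^2 M} \asymp 1$ is interior to the support and $|\psi''(u_0)| \asymp \lambda \asymp |l|M^{3/2}$ is bounded away from $0$. Here I would invoke the stationary phase asymptotic with a single main term and relative error $O(\lambda^{-1})$, which reproduces $c\, e^{-\frac{l}{|l|}\pi i/4}\,\tfrac{\sqrt{|k|}}{|l|}\, e\big(\tfrac{4k^3}{27l^2}\big)\, w_1\big(\tfrac{4k^2}{9Ml^2}\big)$ from the data computed above. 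Since this main term has size $\asymp |l|^{-1/2}M^{1/4}$ and $\lambda \asymp |l|M^{3/2}$, the error is $\asymp |l|^{-1/2}M^{1/4}\cdot \lambda^{-1} \asymp |l|^{-3/2}M^{-5/4}$, which is exactly the error term claimed in \eqref{eq:Iasymptotic}.

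The routine content is the two phase estimates; the delicate part, and the step I expect to require the most care, is confirming that the stationary phase remainder is genuinely $O(|l|^{-3/2}M^{-5/4})$ \emph{uniformly} as $k/l$ ranges over the whole dyadic window $\asymp \sqrt{M}$. This reduces to checking that $|\psi''(u_0)| \asymp |l|M^{3/2}$ does not degenerate and that all higher derivatives satisfy $\psi^{(j)}(u_0) = O(|l|M^{3/2})$ with constants independent of $k,l$ in the allowed range; both hold because $u_0 \asymp 1$ and $w_1$ is a fixed smooth compactly supported function, so the implied constants depend only on $w_1$ and $\eta$.
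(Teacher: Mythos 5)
Your proof is correct, and it follows the same basic strategy as the paper: split into the non-stationary regime (where $\phi'\gg |l|M^{1/2}+|k|$ on the support and repeated integration by parts gives arbitrary power savings) and the stationary regime (where the critical point $x_0=\tfrac{4k^2}{9l^2}$, the phase value $\tfrac{4k^3}{27l^2}$, and $|\phi''(x_0)|^{-1/2}=\tfrac{2\sqrt2}{3}\tfrac{\sqrt{|k|}}{|l|}$ produce exactly the stated main term). The one place where your execution genuinely differs is the certification of the error $O(|l|^{-3/2}M^{-5/4})$. The paper invokes the general stationary phase expansion of Blomer--Khan--Young (Proposition 8.2 of \cite{BKY}), whose generic bound on the correction terms, $p_n(t_0)\ll(|l|M^{3/2})^{-n/3}$, is too weak for $n=1,2$; the paper must therefore compute $p_1(t_0)$ and $p_2(t_0)$ by hand, using $H'(t_0)=H''(t_0)=0$ and $w''(t_0)=O(M^{-2})$, to recover the full relative saving of $(|l|M^{3/2})^{-1}$. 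You avoid this entirely by rescaling $x=Mu$ so that the weight and the normalized phase both vary on unit scale with $|\Psi''|\asymp 1$; in that clean situation the classical one-term stationary phase estimate with absolute error $O(\lambda^{-3/2})$, $\lambda\asymp|l|M^{3/2}$, applies directly and gives $M\cdot\lambda^{-3/2}=|l|^{-3/2}M^{-5/4}$ at once. Both routes are valid; yours is shorter here because the weight's scale of variation coincides with the length of integration, which is precisely the degenerate case in which the general-purpose \cite{BKY} bounds are not sharp. Your closing remarks on uniformity in $k,l$ over the dyadic window are the right thing to check and are handled correctly.
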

Remark: There are many expositions on exponential integrals in the literature, but most do not exploit the case with a $C^{\infty}$ weight function (which actually simplifies and strengthens the estimates), so for convenience we shall refer to Lemma 8.1 (a first derivative bound) and Proposition 8.2 (stationary phase) of \cite{BKY}.  We desire strong error terms at this early stage in order to clearly see where the barriers to improvement occur.  For the purposes of proving Theorems \ref{thm:mainthm} and \ref{thm:smoothTsum}, the reader could substitute the more standard estimates.

\begin{proof}[Proof of Lemma \ref{lemma:exponentialintegral}]
Suppose first that $|k| \leq \eta |l| M^{1/2}$ or $|k| \geq \eta^{-1} |l| M^{1/2}$ for some sufficiently small but fixed $\eta > 0$.  In these cases the phase $h(t) = - lt^{3/2} + kt$ satisfies $h'(t) \gg |l| M^{1/2} + |k|$.  The same estimate holds if $k$ and $l$ have opposite signs.  Then Lemma 8.1 of \cite{BKY} implies \eqref{eq:Istrongupperbound}

Next suppose $k/l \asymp \sqrt{M}$, so there is a stationary point at $x_0 = \frac{4 k^2}{9 l^2}$ and we may apply Proposition 8.2 of \cite{BKY}.  In the notation of \cite{BKY}, we have $(X,V,Y,Q) = (1, M, |l| M^{3/2}, M)$.  In \cite[(8.9)]{BKY}, $p_0(t_0)$ is the main term stated in Lemma \ref{lemma:exponentialintegral}.  For $n \geq 1$  \cite[(8.11)]{BKY} gives $p_n(t_0) \ll (|l|M^{3/2})^{-n/3}$. We use this for $n \geq 3$, and manually calculate $p_1(t_0) \ll |l|^{-1} M^{-3/2}$ using the definition of $p_1$ and the fact that $H'(t_0) = H''(t_0) = 0$, and $w''(t_0) = O(M^{-2})$.  We can also see that $p_2(t_0) \ll |l|^{-1} M^{-3/2}$, by a similar type of calculation.
Combining these estimates then leads to the stated error term in \eqref{eq:Iasymptotic} .  
\end{proof}

We return to the analysis of \eqref{eq:TS'}.  Applying Lemma \ref{lemma:exponentialintegral}, we have
\begin{equation}
\label{eq:TS'afterintegral}
 T_S'(N, X) = \sum_{\pm}   \frac{c}{Z} \sum_{k l > 0} e^{-\frac{l}{|l|} \pi i/4}  \frac{ \sqrt{|k|}}{|l|} e\Big(\frac{4 k^3}{27l^2}\Big) w_1\Big(\frac{4 k^2}{9 l^2 M} \Big) \widehat{w_2}\Big(\frac{l}{Z}\Big) + O(Z^{-1/2} M^{-3/4}).
\end{equation}
This error term $Z^{-1/2} M^{-3/4}$ is $\asymp N^{-1} X^{1/2} \ll 1$ which is more than satisfactory for \eqref{eq:smoothTsum}.
By a symmetry argument, we have
\begin{equation}
 T_S'(N,X) = 2c \text{Re}(e^{-\pi i/4} T_S''(N,X)) + O(Z^{-1/2} M^{-3/4}),
\end{equation}
where
\begin{equation}
 T_S''(N,X) = \frac{1}{Z} \sum_{l > 0} \sum_{k > 0}  \frac{ \sqrt{k}}{l} e\Big(\frac{4 k^3}{27l^2}\Big) w_1\Big(\frac{4 k^2}{9 l^2 M} \Big) \widehat{w_2}\Big(\frac{l}{Z}\Big).
\end{equation}
To track our progress so far, a trivial bound at this stage gives
\begin{equation}
\label{eq:TS''trivial}
 T_S'(N, X) \ll M^{3/4} Z^{1/2} \asymp \frac{N}{X^{1/2}}.
\end{equation}
Compared to \eqref{eq:TSeasybound}, \eqref{eq:TS''trivial} is an improvement for $N \ll X^{3/2}$.

Now we decompose the sum over $k$ as $k = k_0 + 3k_1 l$ where $0 \leq k_0  < 3l$ and $k_1 \geq 0$ (in fact, $k_1 \asymp \sqrt{M}$ from the support of $w_1$).  One easily checks $k^3 \equiv k_0^3 + 9 k_0^2 k_1 l \pmod{27 l^2}$, 
so we now have a linear exponential sum in $k_1$.   
This will often have substantial cancellation, which as usual we detect via Poisson summation.  We calculate
\begin{multline}
\label{eq:k1sumPoisson}
  \sum_{k_1 =0}^{\infty} e\Big(\frac{4k_0^2 k_1}{3l} \Big) (k_0 + 3 k_1 l)^{1/2} w_1\Big(\frac{4 (k_0 + 3k_1 l)^2}{9l^2 M} \Big) 
  \\
  = 
  \sum_{r \in \mz} \int_0^{\infty} e\Big(\frac{4k_0^2 t}{3l} \Big) (k_0 + 3 t l)^{1/2} w_1\Big(\frac{4 (k_0 + 3t l)^2}{9l^2 M} \Big)  e(-rt) dt.
 \end{multline}
After changing variables $t \rightarrow t-\frac{k_0}{3l}$, we may retain the range of integration as $0 \leq t < \infty$ because this shift is $O(1)$ while $w_1$ has support for $t \asymp \sqrt{M}$.  We may write \eqref{eq:k1sumPoisson} as
\begin{equation}
 \frac{\sqrt{3l} M^{3/4}}{2 \sqrt{2}} \sum_{r \in \mz} e\Big(\frac{-4 k_0^3}{9l^2} + \frac{rk_0}{3l} \Big) W_1\Big(\frac{\sqrt{M}}{6l} (3lr - 4 k_0^2) \Big),
\end{equation}
where $W_1$ is defined by
\begin{equation}
 W_1(y) = \int_0^{\infty} t^{1/2} w_1(t^2) e(-yt) dt.
\end{equation}
Note that $W_1$ is Schwartz-class.
Thus we obtain
 \begin{equation}
 \label{eq:TS''niceexpression}
  T_S''(N, X) = \frac{3^{1/2} M^{3/4}}{2^{3/2} Z} \sum_{l > 0} \frac{\widehat{w_2}\Big(\frac{l}{Z}\Big)}{l^{1/2}}  \sum_{0 \leq k_0 < 3 l}  \sum_{r \in \mz} e\Big(\frac{-8 k_0^3}{27l^2} + \frac{rk_0}{3l} \Big) W_1\Big(\frac{\sqrt{M}}{6l}\big(3lr -4k_0^2 \big) \Big).
 \end{equation}

 At this point it is difficult to prove any cancellation in the sum because all of the variables are constrained to essentially have $|4k_0^2 - 3lr| \ll l M^{-1/2 + \varepsilon}$.  Therefore we give up any cancellation and estimate the sum trivially.  It is natural to initially restrict to $L < l \leq 2L$ where $L$ runs over powers of $2$, with $1/2 \leq L \ll Z N^{\varepsilon}$, and then to $|4k_0^2 - 3lr| \ll \frac{L}{\sqrt{M}} N^{\varepsilon}$, using the rapid decay of $\widehat{w_2}$ and $W_1$.  We may then assume $|r| \leq \frac{4}{3} \frac{k_0^2}{l} + O(\frac{LN^{\varepsilon}}{\sqrt{M}}) \leq 24L(1+o(1))$.  Thus,
 \begin{equation}
 \label{eq:TS''expression}
  T_S''(N,X) \ll \frac{M^{3/4}}{Z} \sum_{\substack{1 \ll L \ll Z N^{\varepsilon} \\ L \text{ dyadic}}}  L^{-1/2}  U(6L, 12L, 25L, M^{-1/2} L N^{\varepsilon}) + N^{-100},
 \end{equation}
where
\begin{equation}
\label{Udef}
 U(A, B, C, D) = \# \{1 \leq |a| \leq A, |b| \leq B, |c| \leq C :  |b^2 - ac| \leq D \}.
\end{equation}
Here we wrote $b = 2k_0$ (whence $|b| \leq 6l \leq 12L$), $a =3l \leq 6L$, and $c=r$ (whence $|c| \leq 25L)$.

\begin{mylemma}
\label{lemma:Ubound}
 Suppose that $A,B, C \geq 1$, and $D \ll \min(B^2, AC)$.  Then we have
 \begin{equation}
  U(A,B,C,D) \ll A (1 +  \sqrt{D}) + B(1+D) (AC)^{\varepsilon}.
 \end{equation}
\end{mylemma}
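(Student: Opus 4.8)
The plan is to count the triples directly, organizing them by the value of the near-vanishing form itself. Writing $n := b^2 - ac$, the defining constraint becomes $|n| \le D$, so I would first rewrite
\begin{equation*}
U(A,B,C,D) = \sum_{|b| \le B} \ \sum_{|n| \le D} \#\{(a,c) : ac = b^2 - n, \ 1 \le |a| \le A, \ |c| \le C\}.
\end{equation*}
The point of this reparametrization is that for each fixed pair $(b,n)$ the integer $b^2-n$ is determined, and the inner count is just the number of factorizations $b^2 - n = ac$ with a divisor $a$ of size $\le A$ and cofactor $c = (b^2-n)/a$ of size $\le C$. This converts a two-dimensional lattice count into a divisor problem, which is where the saving over the trivial bound comes from.

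The estimate then splits according to whether the product $ac = b^2-n$ vanishes. In the generic case $b^2 - n \neq 0$, any admissible factorization forces $|b^2 - n| = |ac| \le AC$, so the number of choices of $a$ is bounded by the number of divisors of the nonzero integer $b^2-n$, namely $\ll (AC)^{\varepsilon}$ by the standard estimate $d(k) \ll_\varepsilon k^{\varepsilon}$; the cofactor $c$ and its size constraint only cut this down further. Summing over the $\ll B(1+D)$ pairs $(b,n)$ then contributes $\ll B(1+D)(AC)^{\varepsilon}$, which is the second term in the claim.

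The first term comes entirely from the degenerate locus $b^2 - n = 0$. Here $ac = 0$ together with $a \neq 0$ forces $c = 0$, after which $a$ is unconstrained apart from $1 \le |a| \le A$, giving $\ll A$ triples. Such a pair occurs only when $n = b^2 \le D$, i.e.\ $|b| \le \sqrt D$, so there are $\ll 1 + \sqrt D$ relevant values of $b$ (each with its unique $n = b^2$), and this locus contributes $\ll A(1 + \sqrt D)$. Adding the two contributions gives the stated bound.

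The one point that needs care---and the only place where the argument is not completely automatic---is the separation of the zero-product locus $ac = 0$: because $a$ is forced away from $0$ while $c$ is not, these triples cannot be folded into the divisor estimate and must be counted on their own, and they are exactly what produces the term $A\sqrt D$. I do not anticipate any genuine analytic obstacle beyond this bookkeeping; the hypotheses $A,B,C \ge 1$ and $D \ll \min(B^2, AC)$ are used only to present the two error terms in their clean final shape (for example to guarantee $\min(\sqrt D, B) \asymp \sqrt D$), so that the whole proof rests on nothing more than the uniform divisor bound $d(k) \ll_\varepsilon k^{\varepsilon}$.
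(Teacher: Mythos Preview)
Your proof is correct and follows essentially the same approach as the paper: both split off the degenerate locus $c=0$ (equivalently $ac=0$), bounding it by $A(1+\sqrt{D})$, and then handle the remaining case by replacing the pair $(a,c)$ with the product $q=ac=b^2-n$ and invoking the divisor bound $d(q)\ll (AC)^{\varepsilon}$. The only cosmetic difference is that the paper parameterizes by $q=ac$ directly while you introduce $n=b^2-ac$ first, but the underlying counting is identical.
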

\begin{proof}
 First, consider the elements with $c=0$.  Then we are counting $|b| \leq \min(B,  \sqrt{D} ) \ll \sqrt{D}$, and there is no condition on $a$, so the total number of such elements is
 \begin{equation}
  \ll  A (1 +  \sqrt{D}).
 \end{equation}
Next suppose $c \neq 0$.  Let $q = ac$ be a new variable, with $1 \leq |q| \ll AC$
and having multiplicity $d(q) \ll (AC)^{\varepsilon}$.  For each $b$, the number of $q$ satisfying $|b^2 - q| \leq D$ is at most $1 + 2D$, so the number of elements with $c \neq 0$ is
\begin{equation}
 \ll B(1+D) (AC)^{\varepsilon}. \qedhere
\end{equation}
\end{proof}
Applying Lemma \ref{lemma:Ubound} to \eqref{eq:TS''expression}, we have
\begin{equation}
 T_S''(N,X) \ll \frac{M^{3/4+\varepsilon}}{Z} \sum_{\substack{1 \ll L \ll Z N^{\varepsilon} \\ L \text{ dyadic}}} L^{1/2}(1 + M^{-1/4} L^{1/2})^2 \ll \frac{M^{3/4+\varepsilon}}{Z} Z^{1/2} \Big(1 + \frac{Z}{M^{1/2}}\Big).
\end{equation}
Simplifying this with $Z = N/X$ and $M = N^{2/3}$ leads to
\begin{equation}
 T_S''(N,X) \ll \frac{N^{2/3 + \varepsilon}}{X^{1/2}} + X^{1/2} N^{\varepsilon},
\end{equation}
as desired for \eqref{eq:smoothTsum}.
%
%
%
%
%
It is worthy of note that our final bound comes from a completely different (and much simpler) lattice point counting for integral points near the curve $4y^2 - 3xz$.  A similar feature also occurred in the work of Elkies, and the quadratic form can already be seen in \cite[(42)]{Elkies}.  However, there is a difference; in our situation, we have already extracted the expected number of points in \eqref{eq:Mainterm}, and we are summing integral points along the quadratic form with an oscillatory weight function.  In Elkies's case, the integral points along the quadratic form are providing the expected number of points itself.

%
%
%


\section{Speculation on improvements of Theorem \ref{thm:smoothTsum}}
\label{section:conjecture}
It would be of interest to extend Theorem \ref{thm:smoothTsum} to allow smaller values of $X$ compared to $N$.  Here we present some discussion on what such an extension might look like.  It is clear from \eqref{eq:TSonePoisson2} that if $X \gg N^{1+\varepsilon}$ (so $Z \ll X^{-\varepsilon}$), then $T_S(N,X) = \frac{XM}{N} \widehat{w_1}(0) \widehat{w_2}(0) + O(X^{-100})$, so the volume term is a very accurate count indeed.  However, as $X$ becomes smaller, in particular with $X \asymp N^{2/3}$, then the contribution from the Davenport family with $\delta = 1$ (i.e., \eqref{eq:Davenportexample}) might split off from the volume term.  This causes us to suspect the existence of a secondary main term hiding below the surface.

 The proof of Theorem \ref{thm:smoothTsum} gave
 \begin{equation}
  T_S(N,X) = \frac{XM}{N} \widehat{w_1}(0) \widehat{w_2}(0) + 2c \text{Re}(e^{-\pi i/4} T_S''(N,X))  + O(N^{-1} X^{1/2}),
 \end{equation}
where 
$T_S''(N,X)$ is given by \eqref{eq:TS''niceexpression}.  Writing $D = 4k_0^2 - 3lr$, and eliminating $r$ gives
\begin{equation}
\label{eq:TS''section3}
 T_S''(N, X) = \frac{3^{1/2} M^{3/4}}{2^{3/2} Z} \sum_{D \in \mz}  \sum_{l > 0} \frac{\widehat{w_2}\Big(\frac{l}{Z}\Big)}{l^{1/2}} W_1\Big(\frac{-D\sqrt{M}}{6l} \Big) S(D;3l),
\end{equation}
where
\begin{equation}
\label{eq:SDldef}
 S(D;3l) = \sum_{\substack{ x \shortmod{3l} \\ 4x^2 \equiv D \shortmod{3l}}} e\Big(\frac{4x^3 - 3D x}{27l^2} \Big).
\end{equation}
We remark that the sum over $x$ in \eqref{eq:SDldef} is well-defined modulo $3l$, which is a nice consistency check (we originally defined it via $0 \leq k_0 < 3l$). To see this, let 
$f(x) = 4x^3- 3Dx$, and note that $f(x+3ly) \equiv f(x) + 3ly f'(x) \pmod{27l^2}$, by a Taylor expansion, using the fact that $f''(x) \equiv f'''(x) \equiv 0 \pmod{3}$.   Furthermore, the condition $4x^2 \equiv D \pmod{3l}$ means that $f'(x) \equiv 0 \pmod{9l}$, so that $f(x+3ly) \equiv f(x) \pmod{27l^2}$ when $x$ is constrained to solutions to $4x^2 \equiv D \pmod{3l}$.

Since $W_1$ has rapid decay, we may practically assume $|D| \ll M^{-1/2} l N^{\varepsilon}$, so the sum over $D$ is much shorter than the sum over $l$.  In the crucial range $X = N^{2/3}$, we have $Z = N/X = N^{1/3}$, and $M^{1/2} = N^{1/3}$, so that the sum over $D$ is almost bounded in this range.  Therefore, we need to focus on the sum over $l$.  Based on both theoretical and computational evidence, we have
\begin{myconj}
\label{conj:sumofexpsums}
 Suppose that $D$ is not a square and $Y \geq 1$.  Then
 \begin{equation}
 \label{eq:SDsumconj}
 F(D;Y) := \sum_{l \leq Y} \frac{S(D;3l)}{\sqrt{l}} \ll Y^{\varepsilon},
 \end{equation}
 uniformly in $D \ll Y^{1+\varepsilon}$.  Furthermore,
\begin{equation}
\label{eq:T10conjecturalbound1}
 F(0;Y) = 3 Y^{1/2} + O(Y^{5/18 + \varepsilon}).
\end{equation}
\end{myconj}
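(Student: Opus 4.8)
The plan is to study the Dirichlet series
\[
\mathcal{D}_D(s) = \sum_{l \geq 1} \frac{S(D;3l)}{l^{1/2}}\, l^{-s},
\]
so that $F(D;Y)$ of \eqref{eq:SDsumconj} is a partial sum of its coefficients, recoverable by a Perron-type contour integral. The two assertions then translate into statements about the analytic continuation and polar structure of $\mathcal{D}_D(s)$: the non-square bound $F(D;Y)\ll Y^\varepsilon$ should follow from holomorphy of $\mathcal{D}_D(s)$ up to $\Re(s)=0$ together with polynomial growth on vertical lines, while the asymptotic \eqref{eq:T10conjecturalbound1} should come from a single simple pole at $s=\tfrac12$ with residue $\tfrac32$, whose Perron contribution is $2\cdot\tfrac32\,Y^{1/2}=3Y^{1/2}$.

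First I would establish a twisted multiplicativity for $S(D;3l)$ in the variable $l$, reducing the global sum to local factors supported on prime powers. The verification already recorded after \eqref{eq:SDldef} — that $f(x)=4x^3-3Dx$ satisfies $f(x+3ly)\equiv f(x)\pmod{27l^2}$ on the locus $4x^2\equiv D\pmod{3l}$ — is the arithmetic heart of this factorization, and a Chinese Remainder Theorem argument should split $S(D;3l)$ across the primes dividing $l$ once the prime $3$ (which meets both moduli $3l$ and $27l^2$, and whose phase exponent $3+2a_3$ is genuinely asymmetric with respect to the congruence exponent $a_3+1$) and the prime $2$ (from the factor $4$) are treated by hand. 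At an unramified odd $p$ the local factor is a short cubic exponential sum over the solutions of $4x^2\equiv D\pmod{p^a}$; its size is governed by whether $D$ is a square modulo $p$, and I expect it to be a Sali\'e/cubic-Gauss-sum hybrid exhibiting square-root cancellation for $p\nmid D$.

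Next I would identify $\mathcal{D}_D(s)$, up to finitely many bad Euler factors, with a specialization of a cubic metaplectic Eisenstein series, as hinted in the paragraph preceding the conjecture; the sums \eqref{eq:SDldef} over roots of the quadratic congruence $4x^2\equiv D\pmod{3l}$ are exactly of the shape appearing in the Fourier--Whittaker coefficients of such series, which is also where the connection to the equidistribution of roots of quadratic congruences enters. The known meromorphic continuation of these Eisenstein series would then supply the continuation of $\mathcal{D}_D(s)$ and locate its poles. For $D=0$ the distinguished pole at $s=\tfrac12$ produces the main term $3Y^{1/2}$; the bias phenomenon familiar from the theory of cubic Gauss sums (Patterson, Heath--Brown) is the natural mechanism for a pole when $D$ is a square and its \emph{absence} when $D$ is a non-square, giving the cancellation $F(D;Y)\ll Y^\varepsilon$.

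The hardest point will be twofold. First, making the identification with a metaplectic Eisenstein series precise at the ramified primes and through the square modulus $27l^2$, and then controlling $\mathcal{D}_D(s)$ \emph{uniformly} in $D$ for $D\ll Y^{1+\varepsilon}$, since the conjecture demands uniformity and the contour shift needs polynomial vertical growth. Second, extracting the sharp error $O(Y^{5/18+\varepsilon})$ rather than merely $O(Y^{1/2-\delta})$: this should require a subconvexity- or Duke--Friedlander--Iwaniec-type input on the distribution of the roots of $4x^2\equiv D$, with the exponent $5/18$ reflecting the strength of the best available such bound. As an unconditional but weaker fallback, one may bound each $S(D;3l)$ by the number of roots of the quadratic congruence, which is $\ll l^\varepsilon$ and yields only $F(D;Y)\ll Y^{1/2+\varepsilon}$, and then attempt cancellation in the $l$-aspect directly by Poisson summation or van der Corput — this reproduces the quadratic-form count $U$ of \eqref{Udef} but does not by itself reach $Y^\varepsilon$.
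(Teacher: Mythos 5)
This statement is a \emph{conjecture}: the paper does not prove it, and indeed explicitly remarks that ``it appears to be quite difficult to prove anything along these lines,'' precisely because $S(D;3l)$ differs from the Weyl sums $\rho_h(q)$ of the equidistribution literature in two essential ways (the phase $4x^3-3Dx$ is cubic rather than linear, and the denominator is the modulus \emph{squared}). Your proposal is therefore a program rather than a proof, and its load-bearing step --- meromorphic continuation of $\mathcal{D}_D(s)$ from $\Re(s)>\tfrac12$ all the way to $\Re(s)=0$ with polynomial vertical growth, uniformly in $D\ll Y^{1+\varepsilon}$ --- is exactly the open problem, not a reduction of it. The asserted identification of $\mathcal{D}_D(s)$ with a specialization of a cubic metaplectic Eisenstein series is not substantiated: the Fourier coefficients of those objects involve complete cubic Gauss sums $\sum_{x \bmod c} e\big((Ax^3+Bx)/c\big)$ over a full residue system, whereas $S(D;3l)$ is supported on the $O(l^{\varepsilon})$ roots of $4x^2\equiv D\pmod{3l}$ with denominator $27l^2$. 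Moreover, holomorphy up to $\Re(s)=0$ would amount to essentially full square-root cancellation against the trivial bound $F(D;Y)\ll Y^{1/2+\varepsilon}$; even for the genuinely metaplectic series $\sum_c H(A,c)c^{-s}$, Patterson's continuation only reaches far enough to give an error $O(X^{5/4+\varepsilon})$ against a main term of size $X^{4/3}$. The paper's actual evidence for the non-square case is heuristic (randomness of $4x^3-3Dx$ on the root set) plus numerics (Table \ref{table:FDYtable} and Figure \ref{fig:ScatterPlot}), which in fact suggest a persistent positive bias rather than the clean analytic structure you posit.

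You have also mislocated the source of the exponent $5/18$. In the paper's (itself conditional) derivation of \eqref{eq:T10conjecturalbound1}, the case $D=0$ degenerates the quadratic congruence to $3lr=4x^2$, which is solved explicitly by the parametrization $l=\delta_1 gq^2$, $r=\delta_2 gs^2$, $2x=3gqs$ with $\delta_1\delta_2=3$; the main term $3Y^{1/2}$ arises from the terms $\delta_1=q=1$, and the remainder is re-expressed through complete cubic exponential sums $H(A,c)=\sum_{x\bmod c}e(Ax^3/c)$. The exponent then comes from inserting the \emph{conjectured} uniform Patterson bound $P(A;X)=\sum_{c\le X}H(A,c)\ll X^{4/3+\varepsilon}$ and optimizing $Q^{1/3}\min\big(Y^{1/2}/Q,\,Q^{1/2}\big)$ at $Q=Y^{1/3}$, giving $Y^{5/18}$. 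It has nothing to do with a Duke--Friedlander--Iwaniec-type subconvexity input on the roots of $4x^2\equiv D$ (for $D=0$ there is no irreducible quadratic to equidistribute); it reflects the exceptional $X^{4/3}$ growth coming from the pole of the cubic metaplectic Eisenstein series, together with an unproven uniformity in $A$. Your unconditional fallback $F(D;Y)\ll Y^{1/2+\varepsilon}$ is correct but is exactly the trivial bound that the conjecture is meant to beat.
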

In case $D \neq 0$ is a square, then we expect that the sum in \eqref{eq:SDsumconj} has a main term.  However, there are a number of annoying (but presumably surmountable) features that cause the calculation of the main term to be difficult.  In order to avoid these technical problems, yet capture some of the important features of $F(D;Y)$ for $D$ square, we study a modified version of $F$ as follows.
\begin{myconj}
\label{conj:sumofexpsums2}
 Let 
 \begin{equation}
  G(D;Y) = \sumstar_{\substack{l \leq Y \\ (l, 2D) =1}} \frac{1}{\sqrt{l}} \sum_{\substack{x \shortmod{l} \\ x^2 \equiv D \shortmod{l}}} e\Big(\frac{x^3 - 3D x}{l^2} \Big),
 \end{equation}
 where the star on the sum indicates that it is restricted to squarefree integers.
Then if $d \neq 0$, we have
\begin{equation}
\label{eq:sumofexpsums2}
 G(d^2;Y) \sim 2 \sumstar_{\substack{l \leq Y \\ (l, 2d) =1}} \frac{1}{\sqrt{l}},
\end{equation}
for $d$ fixed and $Y \rightarrow \infty$.
\end{myconj}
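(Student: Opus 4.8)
The plan is to evaluate the inner sum over $x$ in closed form, split off a \emph{diagonal} contribution that yields the predicted right-hand side, and then show that the remaining off-diagonal terms contribute $o(Y^{1/2})$.

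\emph{The inner sum.} Fix a squarefree $l$ with $(l,2d)=1$; then $l$ is odd and coprime to $d$, and the solutions of $x^2\equiv d^2\shortmod{l}$ are parametrized by coprime factorizations $l=ab$ through $x\equiv d\shortmod{a}$ and $x\equiv -d\shortmod{b}$. Writing $f(x)=x^3-3d^2x$ and Taylor-expanding exactly as in the discussion following \eqref{eq:SDldef}, one gets $f(x)\equiv -2d^3\shortmod{a^2}$ and $f(x)\equiv 2d^3\shortmod{b^2}$; the Chinese Remainder Theorem then gives
\[
\frac{f(x)}{l^2}\equiv \frac{-2d^3\,\overline{b^2}}{a^2}+\frac{2d^3\,\overline{a^2}}{b^2}\shortmod{1},
\]
where $\overline{b^2}$ is the inverse of $b^2$ modulo $a^2$, and similarly for $\overline{a^2}$. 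Thus $G(d^2;Y)=\sumstar_{ab\le Y}(ab)^{-1/2}e(\Phi(a,b))$, the sum running over coprime pairs $(a,b)$ with $ab$ squarefree and coprime to $2d$, and $\Phi(a,b)$ the phase above.

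\emph{The diagonal.} The factorizations with $b=1$ or $a=1$ give phases $\e{-2d^3}{l^2}$ and $\e{2d^3}{l^2}$, whose sum is $2\cos(4\pi d^3/l^2)=2+O(d^6/l^4)$. Hence the diagonal contributes
\[
\sumstar_{\substack{l\le Y\\(l,2d)=1}}\frac{2\cos(4\pi d^3/l^2)}{\sqrt l}=2\sumstar_{\substack{l\le Y\\(l,2d)=1}}\frac{1}{\sqrt l}+O_d(1),
\]
which is exactly the conjectured main term (of size $\asymp Y^{1/2}$). It therefore remains to prove that the off-diagonal sum, restricted to $a,b>1$, is $o(Y^{1/2})$.

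\emph{The off-diagonal, and the main obstacle.} The structural fact behind the cancellation is that for every $a>1$ the complete sum $\sum_{b\bmod a^2,\,(b,a)=1}\e{-4d^3\,\overline{b^2}}{a^2}$ \emph{vanishes}. Indeed, substituting $b\mapsto\overline b$ turns it into a quadratic Gauss sum over the units modulo the square $a^2$; the complete Gauss sum to modulus $a^2$ equals $a$ exactly (the usual sign and root-of-unity factors collapse because $a^2\equiv1\shortmod{4}$ and the quadratic symbol modulo $a$ squares to $1$), and M\"obius inclusion--exclusion over the coprimality condition then yields $a\sum_{e\mid a}\mu(e)=0$. This vanishing mean is precisely what leaves the diagonal as the only main term. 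Completing the $b$-sum against it, the relevant Fourier coefficients are complete exponential sums $\sum_{b\bmod a^2,\,(b,a)=1}\e{-4d^3\,\overline{b^2}+hb}{a^2}$ to the square modulus $a^2$, which admit square-root cancellation (by $p$-adic stationary phase or Weil-type bounds); the symmetric statement holds with $a$ and $b$ interchanged.

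Using these inputs, a term-by-term completion in the longer of the two variables places the off-diagonal sum at the threshold $Y^{1/2+\varepsilon}$, but this is not yet $o(Y^{1/2})$: one must extract genuine extra cancellation. In the unbalanced ranges (one of $a,b$ large compared with the square of the other) the needed saving should come from the oscillation of these exponential-sum coefficients as the \emph{short} variable also varies. The genuine obstacle is the balanced range $a\asymp b\asymp Y^{1/2}$, where neither variable runs over a full period modulo the square of the other; after completion one faces a truly two-dimensional incomplete exponential sum to moduli of size $\asymp Y$, whose trivial bound is exactly $Y^{1/2}$. Beating it by a power of $Y$ --- via the square-modulus exponential sum estimates above fed into a two-variable large-sieve or van der Corput argument, in the spirit of the techniques for square moduli acknowledged in the introduction --- is the step I expect to be the crux.
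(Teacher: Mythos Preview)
Your route is essentially the paper's own: the paper's subsection ``Evidence for Conjecture~\ref{conj:sumofexpsums2}'' parametrizes the roots of $x^2\equiv d^2\pmod l$ by factorizations $l=l_1l_2$, applies the Chinese Remainder Theorem to reach the same bilinear expression
\[
G(d^2;Y)=\sumstar_{\substack{l_1l_2\le Y\\(l_1l_2,2d)=1\\(l_1,l_2)=1}}\frac{1}{\sqrt{l_1l_2}}\,e\!\Big(2d^3\Big(\frac{\overline{l_1^2}}{l_2^2}-\frac{\overline{l_2^2}}{l_1^2}\Big)\Big),
\]
isolates the diagonal $l_1=1$ or $l_2=1$ to obtain exactly your $2\cos(4\pi d^3/l^2)/\sqrt{l}$ main term, and then points to the vanishing of the complete sum $\sumstar_{\alpha\bmod q^2}e(\beta\overline{\alpha}^2/q^2)=0$ for $(2\beta,q)=1$ as the reason to expect off-diagonal cancellation. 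Your M\"obius/Gauss-sum proof of this vanishing is correct and is a pleasant alternative to the paper's direct parametrization $\alpha=\alpha_0(1+\alpha_1 q)$.

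The one tactical difference is that the paper first applies the reciprocity law $\overline{a}/b\equiv-\overline{b}/a+1/(ab)\pmod 1$ to place both phase terms at the smaller modulus $\min(l_1,l_2)^2$ before predicting cancellation in the longer variable, whereas you speak of completing directly. In fact your Fourier coefficient $\e{-4d^3\,\overline{b^2}+hb}{a^2}$ already has the coefficient $-4d^3$ rather than $-2d^3$, which shows you have implicitly used reciprocity as well; you may want to make that step explicit, since without it the second phase term has modulus $b^2$ depending on the very variable you are completing.

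Most importantly, this statement is a \emph{conjecture} in the paper, not a theorem: the paper stops at ``We predict that the sum over $l_2$ has some cancellation except when $l_1=1$'' and does not attempt to bound the off-diagonal rigorously. Your diagnosis is sharper than the paper's --- you correctly compute that completion against the square-root bound for the twisted sums lands exactly at $Y^{1/2+\varepsilon}$, and you correctly isolate the balanced range $a\asymp b\asymp Y^{1/2}$ as the genuine obstruction (completing to modulus $\asymp Y$ against a variable of length $\asymp Y^{1/2}$). So there is no gap in your proposal relative to the paper; you have simply made explicit where the missing idea lies, and that missing idea is why the paper states this as a conjecture.
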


\subsection{Application to $T_S(N,X)$}
By partial summation, Conjecture \ref{conj:sumofexpsums} then implies that
\begin{equation}
\label{eq:TS''maintermsum}
 T_S''(N,X) = \frac{3^{1/2} M^{3/4}}{2^{3/2} Z} \sum_{d=0}^{\infty}  \sum_{l > 0} \frac{\widehat{w_2}\Big(\frac{l}{Z}\Big)}{l^{1/2}} W_1\Big(\frac{-d^2\sqrt{M}}{6l} \Big) S(d^2;3l) + O(N^{1/6 + \varepsilon}).
\end{equation}
In light of Conjecture \ref{conj:sumofexpsums2}, it seems reasonable to suppose that the sum over $l$ has an asymptotic, say of the form
\begin{equation}
 \sum_{l > 0} \frac{\widehat{w_2}\Big(\frac{l}{Z}\Big)}{l^{1/2}} W_1\Big(\frac{-d^2\sqrt{M}}{6l} \Big) S(d^2;3l) = c_d \sum_{l > 0} \frac{\widehat{w_2}\Big(\frac{l}{Z}\Big)}{l^{1/2}} W_1\Big(\frac{-d^2\sqrt{M}}{6l} \Big) + O(Z^{1/2-\delta}),
\end{equation}
for some constants $c_d > 0$ and $\delta > 0$.  However, it is difficult to go much further with this because the sum over $d$ may not have enough length for an asymptotic to emerge.  For instance, one may consider the terms with $l \asymp \sqrt{M}$ in which case the sum over $d$ is practically bounded.  Therefore, we choose to keep the sum over $d$ unsimplified, and so we derive
\begin{myconj}
 We have
 \begin{equation}
\label{eq:TS''conjecture}
 T_S''(N,X) = \frac{3^{1/2} M^{3/4}}{2^{3/2} Z} \sum_{d=0}^{\infty} c_d  \sum_{l > 0} \frac{\widehat{w_2}\Big(\frac{l}{Z}\Big)}{l^{1/2}} W_1\Big(\frac{-d^2\sqrt{M}}{6l} \Big)  + O\Big(\big(N^{1/6} +  \frac{N^{1/3}}{Z^{\delta}} + \frac{N^{1/2}}{Z^{13/18}}\big)N^{\varepsilon}\Big).
\end{equation}
\end{myconj}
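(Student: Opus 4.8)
The plan is to derive \eqref{eq:TS''conjecture} directly from the exact expression \eqref{eq:TS''section3} by splitting the sum over $D$ according to whether $D$ is a perfect square, and then feeding in the conjectural cancellation estimates of Conjectures \ref{conj:sumofexpsums} and \ref{conj:sumofexpsums2}. First I would record the effective ranges of the variables: since $W_1$ is Schwartz-class and $\widehat{w_2}$ has rapid decay, we may restrict to $l \ll Z N^{\varepsilon}$ and $|D| \ll M^{-1/2} l N^{\varepsilon} \ll M^{-1/2} Z N^{\varepsilon}$, up to a negligible error. In particular the number of integers $D$ contributing is $\ll M^{-1/2} Z N^{\varepsilon}$, while for $D = d^2$ a square the rapid decay of $W_1\big(-d^2 \sqrt{M}/(6l)\big)$ forces $d \ll Z^{1/2} M^{-1/4} N^{\varepsilon}$.

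For the non-square terms I would apply partial summation in $l$ against $F(D;Y) = \sum_{l \leq Y} S(D;3l)/\sqrt{l}$, with the smooth, slowly-varying weight $\widehat{w_2}(l/Z) W_1\big(-D\sqrt{M}/(6l)\big)$. The bound $F(D;Y) \ll Y^{\varepsilon}$ of \eqref{eq:SDsumconj}, valid uniformly for $D \ll Y^{1+\varepsilon}$, collapses the inner $l$-sum to size $N^{\varepsilon}$ for each fixed non-square $D$. Multiplying by the prefactor $\frac{3^{1/2} M^{3/4}}{2^{3/2} Z} \asymp N^{1/2}/Z$ and by the $\ll M^{-1/2} Z N^{\varepsilon}$ relevant values of $D$ produces a total contribution $\ll M^{3/4} M^{-1/2} N^{\varepsilon} = N^{1/6+\varepsilon}$, which is exactly the error term already recorded in \eqref{eq:TS''maintermsum}.

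It remains to treat the square terms $D = d^2$. For $d \geq 1$ I would invoke the hypothesized asymptotic, whose shape is motivated by Conjecture \ref{conj:sumofexpsums2}, namely that $\sum_{l > 0} \widehat{w_2}(l/Z) l^{-1/2} W_1\big(-d^2\sqrt{M}/(6l)\big) S(d^2;3l)$ equals $c_d$ times the same sum with the factor $S(d^2;3l)$ removed, up to a power-saving error $O(Z^{1/2-\delta})$. Summing this error over the $\ll Z^{1/2} M^{-1/4} N^{\varepsilon}$ relevant $d$ and multiplying by $M^{3/4}/Z$ gives $M^{1/2} Z^{-\delta} N^{\varepsilon} = N^{1/3} Z^{-\delta} N^{\varepsilon}$, the second stated error term. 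For the single term $d = 0$ I would instead use the sharper evaluation \eqref{eq:T10conjecturalbound1}, $F(0;Y) = 3 Y^{1/2} + O(Y^{5/18+\varepsilon})$: partial summation against the main term $3 Y^{1/2}$ fixes the constant $c_0$ in \eqref{eq:TS''conjecture}, while the error, integrated against the derivative of the smooth weight supported on $l \asymp Z$, contributes $\ll Z^{5/18} N^{\varepsilon}$ to the inner sum, hence $\ll M^{3/4} Z^{5/18 - 1} N^{\varepsilon} = N^{1/2} Z^{-13/18} N^{\varepsilon}$ after the prefactor, matching the final error term.

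The principal obstacle is the square case with $d \geq 1$: one must actually establish the $c_d$-asymptotic with a genuine power saving that is uniform in $d$ over the relevant range $d \ll Z^{1/2} M^{-1/4} N^{\varepsilon}$. Conjecture \ref{conj:sumofexpsums2} provides only the analogous statement for the cleaned-up sum $G(d^2;Y)$, restricted to squarefree moduli coprime to $2D$, and only as a bare asymptotic with no explicit error term. Bridging this gap -- removing the squarefree and coprimality restrictions, reinstating the factor $3$ in the modulus $3l$, and upgrading the asymptotic to a power-saving estimate uniform in $d$ -- is precisely where the ``annoying but presumably surmountable'' technical features noted after Conjecture \ref{conj:sumofexpsums} reside, and is the step I expect to be hardest. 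By contrast the non-square and $d=0$ pieces reduce, via routine partial summation, to the inputs \eqref{eq:SDsumconj} and \eqref{eq:T10conjecturalbound1} respectively.
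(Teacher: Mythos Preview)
Your derivation is correct and follows essentially the same route as the paper's heuristic: the paper splits \eqref{eq:TS''section3} into non-square $D$ (handled by \eqref{eq:SDsumconj} via partial summation, yielding $N^{1/6+\varepsilon}$ as in \eqref{eq:TS''maintermsum}), the term $D=0$ (handled by \eqref{eq:T10conjecturalbound1}, whose $Y^{5/18}$ error produces $N^{1/2}Z^{-13/18}$), and squares $D=d^2$ with $d\geq 1$ (handled by the assumed $c_d$-asymptotic with error $O(Z^{1/2-\delta})$, summed over $d \ll Z^{1/2}M^{-1/4}$ to give $N^{1/3}Z^{-\delta}$). You have correctly identified that the genuine gap lies in upgrading Conjecture~\ref{conj:sumofexpsums2} from the cleaned-up $G(d^2;Y)$ to a power-saving asymptotic for the full $l$-sum, uniform in $d$; the paper acknowledges this but does not resolve it, which is why the statement is recorded as a conjecture rather than a theorem.
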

The main point here is that the error term is non-trivial for $X \gg N^{2/3-\eta}$ for some fixed $\eta > 0$.

\begin{myquestion}
 Is it possible to see the contribution of \eqref{eq:Davenportexample} in the right hand side of \eqref{eq:TS''conjecture}?
\end{myquestion}

\subsection{Some justification for Conjecture \ref{conj:sumofexpsums}}
The theoretical evidence for this conjecture comes from the equidistribution of roots of quadratic congruences, which we briefly review.  Suppose that $f(x)$ is an irreducible quadratic polynomial  with integer coefficients, and for each $q$, consider the set of real numbers of the form $x/q$, where $x$ runs over solutions to $f(x) \equiv 0 \pmod{q}$.  Hooley \cite{Hooley} showed that the roots of $f$ are equidistributed modulo $1$, which by the Weyl criterion for equidistribution means that the 
Weyl sums defined by
\begin{equation}
 \rho_h(q) = \sum_{\substack{x \shortmod{q} \\ f(x) \equiv 0 \shortmod{q}}} e\Big(\frac{ h x}{q} \Big)
\end{equation}
satisfy, for $h \neq 0$,
\begin{equation}
 \sum_{q \leq Y} \rho_h(q) = o(Y).
\end{equation}
In fact, Hooley showed a power saving in $Y$.
For quadratic polynomials, it is even known that the roots to prime moduli are equidistributed \cite{DFI} \cite{Toth}.  
In light of this behavior, one might naturally guess that the solutions to $4x^2 \equiv D \pmod{3l}$, with $4x^2 - D$ irreducible (equivalently, $D$ is not a square), are ``random'', and so are the values of $4x^3 - 3Dx \pmod{27l^2}$, leading to cancellation in \eqref{eq:SDsumconj}.  It appears to be quite difficult to prove anything along these lines, however, because $S(D;3l)$ differs from $\rho_h(q)$ in two key ways.
Firstly, $4x^3 - 3Dx$ is much more oscillatory than the linear functions $hx$, and secondly, the exponential sum has a quite different shape since the denominator 
is a multiple of the modulus \emph{squared}.  Because of these difficulties, it seemed prudent to perform some numerical computations.  We calculated 
$F(D; Y)$ 
for a variety of choices of $D$ and $Y \leq 10^5$.  See 
Table \ref{table:FDYtable} for some values, where we have rounded the values of $F(D;Y)$.  
When $D$ is a square, the appearance of a main term is quite visible in the table.
By contrast, when $D$ is not a square, $F(D,Y)$ is substantially smaller. 
Our computations took long enough that it was not feasible to calculate $F(D;10^5)$ for many values of $D$, but see Figure \ref{fig:ScatterPlot} for a histogram of all the values of $F(D;10^4)$ for $|D| \leq 45000$, $D \neq \square$, $D \not \equiv 2 \pmod{3}$.  Note that $F(D;Y) = 0$ if $D \equiv 2 \pmod{3}$ since $S(D;3l) = 0$ if $D$ is not a square modulo $3$, which explains why we omitted these.
\begin{table}[h]
\caption{Values of $F(D;Y)$ for $D$ small}
\label{table:FDYtable}
\begin{tabular}{c|cccccccccc}
$ D $  & $-5$ & $-3$ & $-2$ & $0$ & $1$ & $3$ & $4$ & $6$ & $7$&  $9$ \\
\hline
$F(D;10^5) $ & $31.6$ & $9.7$ & $-2.2$ & $958.1$ & $654.8$ & $9.3$ & $1940.2$ & $.578$ & $22.5$  & $650.5$
\end{tabular}.
\end{table}
\begin{figure}[h]
\includegraphics{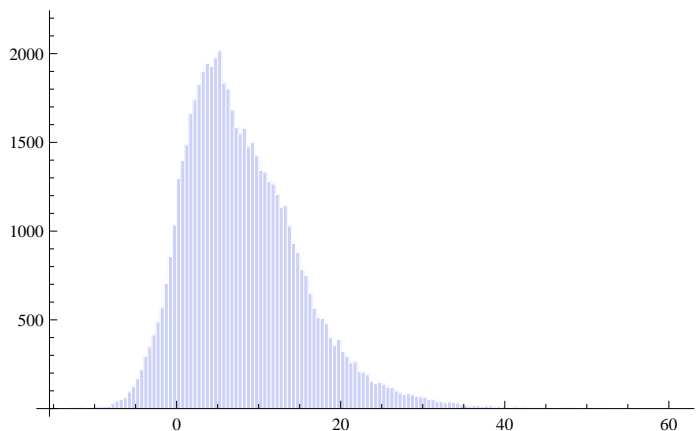}
\caption{Histogram of $F(D;10^4)$, $|D| \leq 45000$, $D \neq \square$, $D \not \equiv 2 \pmod{3}$.}
\label{fig:ScatterPlot}
\end{figure}
Figure \ref{fig:ScatterPlot} clearly indicates that $F(D;Y)$ has a strong tendency to be positive, indicating that it is unreasonable to expect any cancellation arising from a summation over $D$.  Figure \ref{fig:ScatterPlot} gives evidence towards the uniformity in $D$ assumed in Conjecture \ref{conj:sumofexpsums}.  Since the outliers are not visible in the figure, we record that for the listed values of $D$, $-15.3 < F(D;10^4) < 61.1$.  
We leave it as an interesting open problem to show that $F(D;Y)$ is positive on average over non-square $D$ (this would probably not directly improve our unconditional estimates of $T_S''(N,X)$ since the range of interest has $D$ essentially bounded).  This type of analysis could be helpful for understanding the true upper bounds in Conjecture \ref{conj:sumofexpsums}, assuming that the maximum growth (over $D$) is not too much larger than the average growth.

\subsection{The case $D=0$}
\label{section:D0}
Here we give evidence for \eqref{eq:T10conjecturalbound1}.
We begin by recording some obvious simplifications arising from setting $D=0$ in the definition of $F(D;Y)$, giving
\begin{equation}
 F(0;Y) = \sum_{l \leq Y} \frac{1}{\sqrt{l}} \sum_{1 \leq x \leq 3l} \sum_{\substack{r \in \mz \\ 3lr = 4x^2}} e\Big(\frac{4x^3}{27l^2}\Big).
\end{equation}
Write $(l,r) = g$, so $g | 2x$, and in addition $3(l/g)(r/g) = (2x/g)^2$.  Thus we may write $l= \delta_1 g q^2$, $r = \delta_2 g s^2$, where $\delta_1 \delta_2 = 3$, and $(q \delta_1, s \delta_2) = 1$.  Solving for $x$, we deduce $2x = 3g q s $ and the condition $1 \leq x \leq 3l$ translates to $1 \leq s \leq 2 \delta_1 q$.  Hence
\begin{equation}
 F(0;Y) = 
 \sum_{\delta_1 \delta_2 = 3}
\mathop{\sum_{\delta_1 g q^2 \leq Y}}_{\substack{2 | gsq, 1 \leq s \leq 2 \delta_1 q \\(q \delta_1, s \delta_2) = 1 }}
 \frac{1 }{(\delta_1 g q^2)^{1/2}}  e\Big(\frac{ g s^3}{2 \delta_1^2 q } \Big).
\end{equation}
 
We need to extract a main term.  Imagine that we have applied a dyadic partition of unity to the sum over $g$, and that we are considering the terms with $g \asymp R$.  Since the sum over $g$ is an exponential sum with linear phase, there will often be cancellation.  In case $2 | sq$, then $g$ has no constraint modulo $2$, and we see that the sum over $g$ is small if $R \gg q Y^{\varepsilon}$, except in the special case that $2 \delta_1^2 q |s^3$.  This divisibility implies $s=2$ by the following reasoning.  Since $(s, \delta_1 q) = 1$, the condition $2 \delta_1^2 q | s^3$ implies $\delta_1 = q = 1$, and hence $2 | s$.  Combining this with $1 \leq s \leq 2\delta_1 q = 2$, we obtain $s=2$.  If we assume $2 \nmid sq$, then $g$ runs over even integers, and the same argument as above shows the sum over $g$ is small if $R \gg q N^{\varepsilon}$, except if $\delta_1^2 q | s^3$.  As before, this means $\delta_1 = q = 1$, and so $s=1$.  

In summary, we have detected a main term that appears for $\delta_1 = q = 1$, and gives the following
\begin{equation}
F_0(0;Y) = \mathop{\sum_{ g \leq Y}}_{\substack{2 | g }}
 \frac{1 }{g^{1/2}} + \sum_{ g \leq Y}
 \frac{1 }{g^{1/2}} = 3 Y^{1/2} + O(1).
\end{equation}
Write $F_1(0;Y) = F(0;Y) - F_0(0;Y)$, so \eqref{eq:T10conjecturalbound1} amounts to $F_1(0;Y) \ll Y^{5/18 + \varepsilon}$.


For simplicity, let's consider the case $\delta_1 = 1$, $\delta_2 =3$, and $2 | g$.  The inner sum over $s$ is
\begin{equation}
H^*(g', q):= 2\sumstar_{s \shortmod{q}} e\Big(\frac{ g' s^3}{q } \Big),
\end{equation}
where $g/2 = g'$, and the star indicates $(s,q) = 1$.  
To remove the coprimality restriction, we use M\"{o}bius inversion, obtaining
\begin{equation}
H^*(g', q) = 2\sumstar_{s \shortmod{q}} e\Big(\frac{ g' s^3}{q } \Big) = 2\sum_{d | q} \mu(d)H(gd^2, q/d), \quad \text{where} \quad H(A,c) = \sum_{x \shortmod{c}} e\Big(\frac{A x^3}{c} \Big).
\end{equation}
Patterson \cite{PattersonHua1} \cite{PattersonHua2} has shown that if $A \neq 0$, then
\begin{equation}
\label{eq:PAX}
P(A;X) := \sum_{c \leq X} H(A,c) = k(A) X^{4/3} + O_{A,\varepsilon}(X^{5/4 + \varepsilon}),
\end{equation}
for a certain explicit constant $k(A)$.  Patterson's approach is to relate the generating Dirichlet series $\sum_{c=1}^{\infty} \frac{H(A,c)}{c^s}$ to sums of cubic Gauss sums.  Then the theory of metaplectic Eisenstein series leads to bounds on these latter sums.  In these arguments, it is difficult to track the dependence of $P(A;X)$ on $A$.  For some work with a similar flavor, see Louvel \cite{Louvel}, who studied sums like $P(A;X)$ but with $c$ running over Eisenstein integers in an arithmetic progression (and with $A=1$).  
Here $k(A)$ is a multiplicative function of size $\approx A^{-1/3 + o(1)}$, provided that $A$ is cube-free.  Based in part on some numerical calculations, we are led to conjecture that
\begin{equation}
P(A;X) \ll X^{4/3+\varepsilon},
\end{equation}
uniformly for $1 \leq |A| \ll X^{1+\varepsilon}$.  See Figure \ref{fig:ScatterPlot2} for a histogram of $P(A;X)/X^{4/3}$, with $X = 10^4$, and $1 \leq A \leq 6000$ (it suffices to consider $A > 0$ since $H(-A, c) = H(A,c)$).
\begin{figure}[h]
\includegraphics{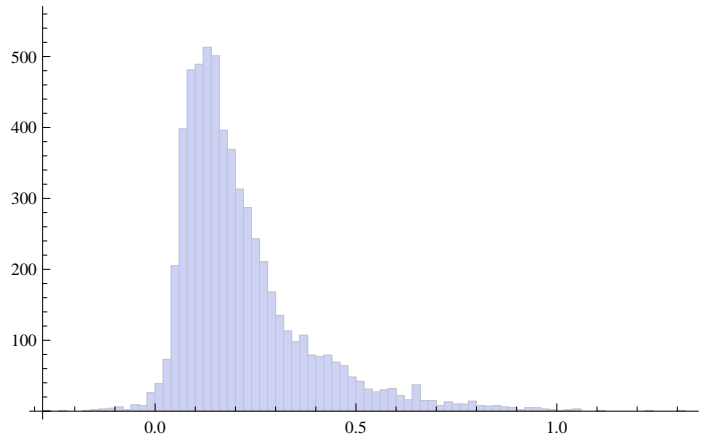}
\caption{Histogram of $P(A;X)/X^{4/3}$, with $X = 10^4$, and $1 \leq |A| \leq 6000$}
\label{fig:ScatterPlot2}
\end{figure}
These numerical experiments indicate that the error term in \eqref{eq:PAX} does not have a large power of $A$;  perhaps it is $O(|A|^{\varepsilon} X^{5/4+\varepsilon})$ (or even a smaller power of $X$)?  
As Patterson notes (see \cite[p.727]{PattersonHua2}), the numerical agreement in \eqref{eq:PAX} varies significantly with $X$ making it difficult to predict the size of the true error term in \eqref{eq:PAX}, even for $A$ fixed (not to mention with some uniformity in $A$).
Furthermore, for such ranges of $A$, these sums of exponential sums are almost always positive (this is clearly illustrated in Figure \ref{fig:ScatterPlot2}), so there is no more room for any cancellation.  As an aside, we have that for all $1\leq A \leq 6000$, and $X = 10^4$, that $-0.28 < P(A,X)/X^{4/3} < 1.31$.

Now we return to bounding $F_1(0;Y)$.  Consider the terms with $g \asymp R$, $q \asymp Q$.  By previous reasoning, we already know the terms with $R \gg QY^{\varepsilon}$ should be negligible, so we may assume $R \ll Q Y^{\varepsilon}$. 
We are thus led to the conjectured bound
\begin{equation}
 F_1(0;Y) \ll Y^{\varepsilon} \max_{\substack{Q^2 R \ll Y N^{\varepsilon} \\ R \ll Q Y^{\varepsilon} }} \frac{Q^{4/3} R}{Q \sqrt{R}} \ll Y^{\varepsilon} \max_{1 \ll Q \ll Y^{1/2}} Q^{1/3} \min\Big(\frac{Y^{1/2}}{Q}, Q^{1/2} \Big) \ll Y^{\frac{5}{18} + \varepsilon},
\end{equation}
as claimed.

\subsection{Evidence for Conjecture \ref{conj:sumofexpsums2}}
For this, we remark that the solutions to $x^2 \equiv d^2 \pmod{l}$ correspond precisely to factorizations $l = l_1 l_2$, where $x \equiv d \pmod{l_1}$ and $x \equiv -d \pmod{l_2}$.  To see this, note that if $p | l$ then $x^2 \equiv d^2 \pmod{p}$ means that $x \equiv \pm d \pmod{p}$, which are distinct residue classes if $p \nmid 2d$.  Therefore, since $l$ is squarefree and $(l, 2d) = 1$, the claimed result follows using the Chinese remainder theorem.  Using the Chinese remainder theorem again, we then have
\begin{equation}
 G(d^2;Y) = \sumstar_{\substack{l_1 l_2 \leq Y \\ (l_1 l_2, 2d) =1 \\ (l_1, l_2) = 1}} \frac{1}{\sqrt{l_1 l_2}} \sum_{\substack{x_1 \shortmod{l_1} \\ x_1 \equiv d \shortmod{l_1}}} e\Big(\frac{\overline{l_2^2}(x_1^3 - 3d^2 x_1)}{l_1^2} \Big)  \sum_{\substack{x_2 \shortmod{l_2} \\ x_2 \equiv - d \shortmod{l_2}}} e\Big(\frac{\overline{l_1^2}(x_2^3 - 3d^2 x_2)}{l_2^2} \Big),
\end{equation}
which quickly simplifies as
\begin{equation}
 G(d^2;Y) = \sumstar_{\substack{l_1 l_2 \leq Y \\ (l_1 l_2, 2d) =1 \\ (l_1, l_2) = 1}} \frac{1}{\sqrt{l_1 l_2}}  e\Big(2d^3 \Big(\frac{\overline{l_1^2}}{l_2^2}- \frac{\overline{l_2^2}}{l_1^2} \Big)\Big).
\end{equation}
An astute reader may notice the appearance of a Sali\'{e} sum to modulus $(l_1 l_2)^2$.  For instance, see (12.43) of \cite{IK}.  However, this is not directly helpful for bounding $G$ because the moduli run over \emph{squares} (with some minor congruence conditions), while the known results on sums of Sali\'{e} sums run over all moduli (again, one may allow congruence conditions too).

Recall the reciprocity law $\frac{\overline{a}}{b} \equiv - \frac{\overline{b}}{a} + \frac{1}{ab} \pmod{1}$.  We use this with $a = l_1^2$, $b= l_2^2$ in case $l_1 < l_2$.  In the opposite situation we switch the roles of $a$ and $b$, so that in all cases we are reducing the modulus.  Thus we arrive at a sum of the form
\begin{equation}
 \sum_{l_1} \sum_{l_2 > l_1} \frac{e\big(\frac{2d^3}{(l_1 l_2)^2}\big)}{\sqrt{l_1 l_2}} e\Big(-4d^3 \frac{\overline{l_2^2}}{l_1^2} \Big),
\end{equation}
as well as a similar one with $l_1 \geq l_2$.  We predict that the sum over $l_2$ has some cancellation except when $l_1 = 1$.  One way to see this is that the completed sum vanishes, that is
\begin{equation}
\label{eq:completedsum}
 \sumstar_{\alpha \shortmod{q^2}} e\Big(\frac{\beta \overline{\alpha}^2}{q^2}\Big) = 0,
\end{equation}
if $(2\beta, q) = 1$.  In turn, a way to verify \eqref{eq:completedsum} is to write $\alpha = \alpha_0(1+\alpha_1 q)$ where $\alpha_0$ runs over $(\mz/q\mz)^*$, and $\alpha_1$ runs over $\mz/q\mz$.  Then $\overline{\alpha} = \overline{\alpha_0}(1-\alpha_1 q)$ (where $\overline{\alpha_0}$ is some integer satisfying $\alpha_0 \overline{\alpha_0} \equiv 1 \pmod{q^2}$), and so $\overline{\alpha}^2 \equiv \overline{\alpha_0}^2(1-2\alpha_1 q) \pmod{q^2}$.  The sum over $\alpha_1$ then vanishes.  Taking into account the other case $l_2 = 1$, we obtain the conjecture
\begin{equation}
 G(d^2;Y) = \sumstar_{\substack{l \leq Y \\ (l, 2d) =1}} \frac{2 \cos\Big(\frac{4 \pi d^3}{l^2}\Big)}{\sqrt{l}} + o(\sqrt{Y}).
\end{equation}
Since $d$ is held fixed, the Taylor expansion for cosine leads to \eqref{eq:sumofexpsums2}.

\end{document}